\documentclass[runningheads]{llncs}

\usepackage[T1]{fontenc}
\usepackage{amsmath,amssymb,mathtools}
\usepackage{microtype}
\usepackage{enumitem}
\usepackage{hyperref}
\usepackage{xcolor}
\newcounter{claim}

\hypersetup{
    colorlinks=true,
    linkcolor=blue,
    urlcolor=blue,
    citecolor=blue
}

\title{Strong Edge Colouring of Disk Graphs: A \texorpdfstring{$6$}{6}-Approximation and an Improved Unit-Disk Bound}
\titlerunning{Strong Edge Colouring of Disk Graphs}

\author{Sandip Das \and
Sk Samim Islam \and
Aashirwad Mohapatra\and Sangita Saha\and
Saumya Sen}

\authorrunning{S. Das et al.}

\institute{
Indian Statistical Institute, Kolkata, India\\
\email{sandip.das.69@gmail.com}\
\email{samimislam08@gmail.com}\
\email{aashirwad\_r@isical.ac.in}\
\email{sangitasaha543@gmail.com}\
\email{saumyasen72@gmail.com}
}

\begin{document}
\maketitle

\begin{abstract}
A \emph{strong edge colouring} of a graph $G$ is an edge colouring in which every colour class is an induced matching. The minimum number
of colours is the \emph{strong chromatic index} $\chi'_s(G)$. If each edge
$e$ is assigned a list $L'(e)$ and its colour must belong to $L'(e)$, the corresponding parameter is the \emph{strong list chromatic index}
$\chi'_{s,\ell}(G)$. From the definitions,  $\chi'_s(G)\le\chi'_{s,\ell}(G)$. 

Barrett 
et al. gave an $8$-approximation for strong edge colouring on unit disk graphs and Grelier et al. improved the approximation factor to $6$. Our first result extends this factor-$6$ guarantee from unit disk graphs to the strictly larger class of disk graphs.

In another direction, Erd\H{o}s and Ne\v{s}et\v{r}il conjectured that the strong chromatic index of a graph of maximum degree $\Delta$ is asymptotically at most $1.25\Delta^2$. The best published general asymptotic upper bound has leading coefficient $1.772$, due to Hurley et al. For unit disk graphs, D\k{e}bski et al. proved that $\chi'_s(G) \leq 1.625 \Delta^2$. Our second result improves this leading coefficient to $225/142 \approx 1.5845$. In fact, the proof establishes a stronger bound $\chi'_{s,\ell}(G)\le\frac{225}{142} \Delta^2+O(\Delta)$ for unit disk graphs. 

\keywords{Strong edge colouring \and Strong chromatic index \and Disk graph \and Unit disk graph \and Approximation algorithm \and List colouring.}
\end{abstract}

\section{Introduction}
A \emph{strong edge colouring} of a graph $G$ is an edge colouring in which
every colour class is an induced matching. Equivalently, it is a proper vertex colouring of $L(G)^2$, the square of the line graph. The minimum number
of colours is the \emph{strong chromatic index}, $\chi'_s(G)$. For each edge $e \in E(G)$, let $L'(e)$ be the list of available colours of $e$, and let $L'=\{L'(e) : e\in E(G)\}$. The graph $G$ is \textit{strongly $L'$-edge-colourable} if there exists a strong edge colouring $c$ of $G$ such that $c(e) \in L'(e)$ for each $e \in E(G)$. For a positive integer $k$, a graph $G$ is \textit{strongly $k$-edge-choosable} if $G$ is  strongly $L'$-edge-colourable for every $L'$ with  $|L'(e)|\geq k$ for all $e \in E(G)$.
The \emph{strong list chromatic index}, $\chi'_{s,\ell}(G)$ is the minimum positive integer $k$ for which $G$ is  strongly $k$-edge-choosable. Note that $\chi'_s(G)\leq\chi'_{s,\ell}(G)$ for every graph $G$.

A \emph{disk graph} is the intersection graph of finitely many closed
Euclidean disks of arbitrary positive radii. A \emph{unit disk graph} is the
special case in which all radii are equal. 
Barrett et al.~\cite{BarrettEtAl2006} obtained an $8$-approximation for
Strong Edge Colouring on unit disk graphs. Later, Grelier et al.~\cite{grelier2019approximate} improved the approximation factor to $6$. They also proved that Strong Edge $6$-Colourability is
NP-complete on unit disk graphs. Consequently, there is no polynomial-time
$(7/6-\varepsilon)$-approximation unless $P=NP$.  We show that the same approximation factor remains valid when the equal-radius
assumption is removed completely. Thus our first result extends the best known unit-disk approximation guarantee to disk graphs.

\begin{theorem}\label{thm:disk}
There is a polynomial-time $6$-approximation algorithm for Strong Edge Colouring on disk graphs. Moreover, the algorithm does not require a disk representation of $G$.
\end{theorem}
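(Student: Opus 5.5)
The plan is to run a greedy colouring of $L(G)^2$ along a suitable edge ordering. We then compare the number of colours used with a clique lower bound on $\chi'_s(G)$. Two edges conflict in $L(G)^2$ exactly when they share an endpoint or are joined by an edge. The target is an ordering $e_1,\dots,e_m$ of $E(G)$, computable from $G$ alone, with the following property: for each $e_i$, the earlier conflicting edges $\{e_j : j<i,\ e_j \text{ conflicts with } e_i\}$ split into at most $6$ sets $C_1,\dots,C_6$. Each $C_t\cup\{e_i\}$ must be a clique of $L(G)^2$, meaning the edges in it pairwise conflict. Then $\omega(L(G)^2)\le\chi'_s(G)$ gives that $e_i$ sees at most $6(\chi'_s(G)-1)$ coloured neighbours. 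So greedy uses at most $6\chi'_s(G)-5$ colours.

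\textbf{Ordering without a representation.} For unit disks, Grelier et al.\ order edges lexicographically by geometry. Here we have no radii, so we mimic the standard smallest-disk-first trick using only the graph. In a disk graph, the vertex $v$ of smallest radius has a neighbourhood coverable by a bounded number of cliques (at most $5$, by the angular-sector argument). This is a purely combinatorial property that can be tested, or approximated by a polynomial-time peeling order. I would use an inductive elimination ordering on vertices, such as a degeneracy-type ordering in which each vertex $v$ is removed when $N[v]$ in the remaining graph has small clique cover. Edges are then ordered by the later of their endpoints' removal times, or equivalently processed so that $e_i=uv$ has $v$ as its "smallest" endpoint among the remaining edges. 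The ordering is obtained greedily: repeatedly pick any vertex satisfying the certifying local condition. Such a vertex always exists because the actual smallest disk does, so correctness never uses the representation.

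\textbf{Counting the 6 cliques.} Fix $e=uv$ with $v$ the smallest disk among the remaining vertices, and let $r_v$ be its radius. Every earlier conflicting edge $f$ has an endpoint in $N[u]\cup N[v]$ and both endpoints among disks of radius $\ge r_v$. Partition the plane around the centre of $v$ into angular sectors (as in the unit-disk proof, six sectors of $60^\circ$ adapted to $v$). Assign each conflicting edge $f$ to a sector according to the position of the endpoint of $f$ adjacent to $u$ or $v$, using the endpoint "closest" to $e$ by a fixed rule. The key geometric lemma to prove is: if two edges $f,f'$ fall in the same sector, then some endpoint of $f$ intersects some endpoint of $f'$ (or they share a vertex). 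This uses the fact that disks larger than $v$ meeting $v$ (or meeting $u$, which meets $v$) inside a narrow sector must pairwise intersect. Together with the fact that each such edge conflicts with $e$, this makes each sector class plus $e$ a clique in $L(G)^2$.

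\textbf{Main obstacle.} The hard step is this sector lemma for disks of varying sizes. In particular, conflicting edges attached through $u$ rather than $v$ are a problem, since $u$ can be huge and so is not controlled by the smallest-disk argument. I would first try to handle them by charging them to the sector of the point where $u$'s neighbour meets $u$ relative to $v$. If that fails, I would instead order by the smaller endpoint of both $u$ and $v$ and use a shrinking argument: replace large disks by subdisks of radius $r_v$ tangent at the intersection points, which preserves the relevant adjacencies and reduces to the unit-disk configuration of Grelier et al. The polynomial-time verification of the combinatorial ordering certificate is routine once this lemma is in place.
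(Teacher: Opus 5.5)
There is a genuine gap. Your reduction is sound: if every back-neighbourhood splits into six cliques of $L(G)^2$, greedy uses at most $6\chi'_s(G)-5$ colours. The failure is in the sector lemma for the smallest-endpoint order, which is false, not merely unproven, and for exactly the reason you flag.

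\emph{Smallest-endpoint order.} Take these disks:
\begin{itemize}
\item $v$ of radius $1$, the unique smallest disk;
\item $u$ of radius about $k^3$, meeting $v$;
\item pairwise far-apart disks $x_1,\dots,x_k$ of radius about $k$, each meeting $u$;
\item for each $i$, pairwise disjoint disks $y_{i,1},\dots,y_{i,k-1}$ of radius slightly above $1$, placed on the side of $x_i$ away from $u$, each meeting only $x_i$.
\end{itemize}
This disk graph is a tree with $\chi'_s(G)=2k$. The $k+1$ edges at $u$ need distinct colours, and the edges $x_iy_{i,j}$ can reuse the same $k-1$ further colours for every $i$. In your order $uv$ is coloured last, and every $x_iy_{i,j}$ conflicts with it through $ux_i$. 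So its back-degree is $k^2$, which exceeds $6(\chi'_s(G)-1)=12k-6$ once $k\ge 12$. Moreover $\{x_1y_{1,1},\dots,x_ky_{k,1}\}$ is an induced matching inside that back-neighbourhood, so no bounded number of cliques covers it.

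The shrinking fallback cannot repair this. A disk with many pairwise disjoint neighbours cannot be replaced by a single subdisk of radius $r_v$ that keeps those adjacencies.

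\emph{Larger-endpoint order.} Ordering by the larger endpoint, the other reading of your text, tames $u$ but still cannot give six cliques. Take a unit disk $u$, any $v$ of radius at most $1$ meeting it, and eight pairwise disjoint disks of radius $0.4$ arranged radially around $u$. Give each of these its own outward disk of radius $1.01$, with centres at distances about $1.39$ and $2.79$ from the centre of $u$ and equally spaced angles. The resulting eight edges form an induced matching of earlier edges, all conflicting with $uv$.

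Separately, certifying ``$N[v]$ has a clique cover of size at most $k$'' is NP-hard in general for fixed $k\ge 3$. In any case that certificate says nothing about $N(u)$.

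\emph{What the paper does instead.} It replaces per-edge clique covers with a global density bound. For any two disjoint induced matchings $A,B$ of a disk graph, at most $3(|A|+|B|)-4$ pairs in $A\times B$ conflict. The proof runs as follows:
\begin{itemize}
\item Write $R_e=D_u\cup D_v$ for an edge $e=uv$. Peel off edges whose region $R_e$ lies inside another $R_f$; these have degree one in the conflict graph $J$.
\item Place tiny witness disks at crossing points of $\partial R_e$ and $\partial R_f$, and inside the lenses $D_u\cap D_v$.
\item Take a planar support of the resulting disk--disk intersection hypergraph (Raman--Ray).
\item Extract from it a triangle-free planar subgraph $T$ with at least $|E(J)|+|A|+|B|-3h$ edges, where $h$ counts pairs sharing an endpoint. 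The bound then follows from $|E(T)|\le 2|V(T)|-4$.
\end{itemize}

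Now split any induced subgraph $H[F]$ of $H=L(G)^2$ along an optimal colouring into $r\le\chi'_s(G)$ induced matchings and sum over pairs. This gives $|E(H[F])|<3(r-1)|F|$, hence a vertex of degree at most $6(\chi'_s(G)-1)-1$. Greedy colouring along the min-degree elimination order of $L(G)^2$ then needs no representation.

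This also shows your worry about computing the ordering was unnecessary. Any merely \emph{existential} ordering with back-degree at most $c\,\chi'_s(G)$ already bounds the degeneracy by $c\,\chi'_s(G)$. So the geometric order never has to be found or certified.
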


In 1985 Erd\H{o}s--Ne\v{s}et\v{r}il conjectured the following. If $G$ is a graph with maximum degree $\Delta$, then  $\chi'_s(G)\le \frac{5 \Delta^2}{4}$ for even $\Delta$ and for odd $\Delta$, $\chi'_s(G)\le\frac{5 \Delta^2}{4} - \frac{2 \Delta - 1}{4}$ 
\cite{ErdosNesetril}. The best published general asymptotic upper bound is $1.772\Delta^2$ for sufficiently large $\Delta$, due to Hurley, de Joannis de Verclos, and Kang~\cite{HurleyEtAl2021,HurleyEtAl2022}. A recent preprint of Davey et al.~\cite{davey2026strong} further improves the bound to $1.73\Delta^2$.
For unit disk graphs, D\k{e}bski, Junosza-Szaniawski, and \'Sleszy\'nska-Nowak~\cite{DebskiEtAl2020} proved that
$\chi'_s(G)\leq 13\Delta^2/8=1.625\Delta^2$. They also constructed a family
of unit disk graphs having the strong chromatic index at least $\frac{9}{16}\Delta^2$. Thus,  the leading coefficient for the strong chromatic index of unit disk graphs is known to lie
between $\frac{9}{16}$ and $\frac{13}{8}$. In the same paper, they asked whether a
strong edge colouring of a unit disk graph with close to $1.625\Delta^2$ colours can be obtained by a polynomial-time algorithm that does not use a disk representation. For sufficiently large $\Delta$, we answer their question affirmatively and also improve the leading coefficient from $1.625$ to $1.585$. In fact, we prove the stronger result that $\chi'_{s,\ell}(G)\le \frac{225}{142}\Delta^2+O(\Delta)$.

\begin{theorem} \label{thm:unit disk}
 Each unit disk graph $G$ of maximum degree $\Delta$ satisfies  $\chi'_s(G)\le
  \frac{225}{142} \Delta^2+ O(\Delta)$.
\end{theorem}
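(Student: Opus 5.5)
We will prove the stronger list version, $\chi'_{s,\ell}(G)\le \frac{225}{142}\Delta^2+O(\Delta)$, by a greedy colouring argument. Fix a unit disk representation with all radii $1/2$, so adjacent centres are at distance at most $1$. For an edge $e=uv$, let $N_s(e)$ be the set of edges at distance at most one from $e$ in $L(G)$, that is, the edges with an endpoint in $N[u]\cup N[v]$. If every edge $e$ has at most $k$ neighbours in $L(G)^2$ among the edges that precede it in some ordering, then colouring greedily in that ordering from lists of size $k+1$ succeeds. So it suffices to find an ordering of $E(G)$ with back-degree at most $\frac{225}{142}\Delta^2+O(\Delta)$. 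This does not need the representation, which also gives the algorithmic answer to the question of D\k{e}bski et al.

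\textbf{Choice of ordering.} We order the edges by their Euclidean length, longest first; equivalently, we colour shortest last. This is the ordering in which D\k{e}bski et al.\ get $13/8$. When $e=uv$ is coloured, every earlier edge $f$ is at least as long as $e$ and touches $N[u]\cup N[v]$.

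\textbf{Geometric count.} Put $\ell=|uv|\le 1$. Split the plane around $e$ into regions: the lens $A=D(u,1)\cap D(v,1)$, and the two lunes $D(u,1)\setminus D(v,1)$ and $D(v,1)\setminus D(u,1)$. Each earlier edge $f$ has an endpoint $w$ in $N(u)\cup N(v)$, and its length is at least $\ell$.
\begin{itemize}
\item The points of $N(u)\cup N(v)$ contribute at most $|N(u)\cup N(v)|\cdot\Delta$ edges. Near each point we refine this using cliques: the vertices inside any region of diameter at most $1$ form a clique, so that region holds at most $\Delta+1$ vertices.
\item Constraining the length of $f$ means that an endpoint $w$ close to $u$ has its other neighbours outside $D(w,\ell)$. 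This lets us trade edge count against the density of vertices in small disks around $w$.
\end{itemize}
We parametrise by the number of vertices in a few cells: we cover $D(u,1)\cup D(v,1)$ by $O(1)$ sectors of diameter at most $1$, say lens pieces and lune pieces around $u$ and $v$. Let $x_i\Delta$ be the number of vertices in cell $i$. Each vertex in cell $i$ has degree at most $\Delta$, but the edges it sends into cells already counted are shared. The back-degree is then bounded by a quadratic form in the $x_i$, after subtracting the double-counted edges between vertices of $N[u]\cup N[v]$. The constraints are linear: $\sum_{i\in S}x_i\le 1$ for every family $S$ of cells whose union has diameter at most $1$, for instance $N(u)$ itself, giving $\deg u\le\Delta$. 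Maximising this quadratic program over the feasible polytope should yield the value $225/142$. That the denominator is $142$ suggests the optimum is an interior critical point of a quadratic of this kind, found by solving a small linear system. The computation is then a finite optimisation that we carry out explicitly, checking the Karush--Kuhn--Tucker conditions on each face.

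\textbf{Main obstacle.} The hard step is choosing the cell decomposition and the accounting of edges counted twice, sharply enough to get below $13/8$. The improvement must come from using the fact that the edges inside $N(u)\cup N(v)$ are counted once rather than twice. It must also use that an earlier edge $f$ at a vertex $w$ of the lens has length at least $\ell$, which pushes its other endpoint outside a disk of radius $\ell$ around $w$.

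I would first try two cells per side (lens and lune) plus the length parameter $\ell$. I would optimise over $\ell$ too, expecting the worst case to be an intermediate $\ell$. If this gives a bound above $225/142$, I would subdivide the lunes by angle until the program closes. The $O(\Delta)$ term absorbs the $+1$'s from $N[\cdot]$ versus $N(\cdot)$ and the rounding.
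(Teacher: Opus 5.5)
Your proposal has a genuine gap: it describes a framework but never proves the bound. The cells, the quadratic objective, the corrections for doubly counted edges and the linear constraints are never written down. The decisive claim, that the program ``should yield the value $225/142$'', is inferred from the denominator of the target rather than derived. Your fallback (``subdivide the lunes until the program closes'') concedes that you have not checked whether the longest-first order reaches this constant at all.

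Moreover, the saving you single out is not enough on its own. Let $X=(N(u)\cup N(v))\setminus\{u,v\}$. Counting the edges inside $X$ once, with the only general lower bound $|E(G[X])|\ge |X|^2/16-O(\Delta)$ (from $\alpha(G[X])\le 8$ and Tur\'an), gives $\Delta|X|-|X|^2/16$. At $|X|\approx 2\Delta$ this is $\frac74\Delta^2$. To go below $13/8$ you must exhibit a quadratic number of conflicting edges that come \emph{after} $e$. Your length observation (an earlier edge at $w$ has its other endpoint outside the open disk of radius $\ell$ about $w$) is never turned into such a count.

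The paper gets exactly this extra saving from a combinatorial ordering, by $\kappa(e)=|E(G[N(u)\cap N(v)])|$, with $b$ defined by $\binom{b-1}{2}\le\kappa(e)<\binom b2$. It splits $X_1$ and $X_2$ into eight sector cliques $C_1,\dots,C_8$. Three facts then drive the bound:
\begin{itemize}
\item Every internal edge of a clique with $|C_i|>b$ has $\kappa\ge\binom b2>\kappa(e)$, so it comes after $e$ and is subtracted twice.
\item $X_3=N(u)\cap N(v)$ has at least $b-1$ vertices, which forces $\sum_i|C_i|\le 2(\Delta-b)$, and it spans $\kappa(e)\approx b^2/2$ edges.
\item $\alpha(G[X])\le 8$ supplies both the Tur\'an bound and the cross-clique bound $|C_1|\min\{|C_2|,|X_3|\}$.
\end{itemize}
The constant is attained on the face $\sum_i|C_i|=2(\Delta-b)$ with $7b<\sum_i|C_i|\le 8b$. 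There the estimate becomes $15\Delta b-\frac{71}{2}b^2\le\frac{225}{142}\Delta^2$, with equality at $b=15\Delta/71$. Nothing in your set-up indicates that a program in cell counts and $\ell$ has this value.

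Finally, your remark that the method needs no representation is false for your ordering. Euclidean edge lengths are not determined by $G$, whereas $\kappa(e)$ is computable from $G$ alone. This does not affect the existence bound, but it does undo your algorithmic claim.
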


For Theorem~\ref{thm:disk}, we prove the degeneracy
bound $\delta^*(L(G)^2)\le6(\chi'_s(G)-1)-1$ and obtain an algorithm.

\medskip

\noindent\textbf{Organization: } In Section~\ref{sec:preliminaries}, we introduce the notation and preliminary definitions used throughout the paper. In Section~\ref{sec:proofthm1}, we prove Theorem~\ref{thm:disk} and establish the polynomial-time $6$-approximation algorithm for Strong Edge Colouring on disk graphs. In Section~\ref{sec:proofthm2}, we prove Theorem~\ref{thm:unit disk} and derive the improved upper bound on the strong list chromatic index of unit disk graphs. Finally, we conclude in Section~\ref{sec:conclusion}.

\section{Preliminaries}\label{sec:preliminaries}

Let $G=(V(G), E(G))$ be a graph with vertex set $V(G)$ and edge set $E(G)$. All graphs are finite and simple. Let $F\subseteq V(G)$, then the \emph{induced subgraph} $G[F]$ is the graph whose vertex set is $F$ and whose edge set consists of all of the edges in $E(G)$ that have both endpoints in $F$. For a graph $G$, its \emph{degeneracy} is $\delta^*(G)=\max\{\delta(H):H\text{ is an induced subgraph of }G\}$, where $\delta(H)$ is minimum degree of a graph $H$. An \emph{independent set} of a graph $G$ is a set of vertices such that no two of which are adjacent. A maximum independent set is an independent set of largest possible size for a given graph $G$. This size is called the \emph{independence number} of $G$ and denoted by $\alpha(G)$. A graph $G=(V, E)$ is \emph{bipartite} if the vertex set $V$ is partitioned into $A$ and $B$  such that there is no edge between any two vertices of $A$ or any two vertices of $B$ and we denote this graph as $G=(A,B,E)$. A set $M\subseteq E(G)$ is a \emph{matching} in $G$ if no two distinct edges of $M$ have a common endpoint. A matching $M$ is an \emph{induced matching} if, in addition, no edge of $G$ joins an endpoint of one edge of $M$ to an endpoint of another edge of $M$.

The \emph{line graph} $L(G)$ of $G$ is the graph where a vertex in $L(G)$ corresponds to an edge in $G$ and there is an edge between two vertices in $L(G)$ if the corresponding edges in $G$ share a vertex.
The \emph{square} $G^2$ of a graph $G$ is the graph with vertex set $V(G)$ in which two distinct vertices are adjacent if and only if their distance in $G$ is at most $2$. Two distinct edges of $G$ \emph{conflict} if they are adjacent in $L(G)^2$, i.e., when two edges of $G$ share an endpoint or two edges are joined by another edge of $G$.

For a disk $D$, let $\partial D$ and $\operatorname{int}(D)$ denote its boundary and interior respectively.
We say that a set of disks of a disk representation $\mathcal D=\{D_v:v\in V(G)\}$ of a disk graph $G$ are in \textit{general-position} if: (i) For every pair of distinct vertices $u,v\in V(G)$, if $uv\in E(G)$, then $\operatorname{int}(D_u)\cap\operatorname{int}(D_v)\neq\emptyset$. Moreover, whenever $\partial D_u\cap\partial D_v\neq\emptyset$, the two boundaries cross at every point of intersection. 
(ii) No three disk boundaries contain a common point.
(iii) No two disks coincide.
(iv)Every disk can be expanded by a sufficiently small positive amount without changing the arrangement of the disks combinatorially.
Here, an expansion of a disk with centre $c$ and radius $r$ means replacing it by the concentric disk with radius $r+\varepsilon$ for some $\varepsilon>0$.
\begin{lemma}\label{lem:gdp}
Every finite disk graph has a disk representation satisfying general-position conditions.
\end{lemma}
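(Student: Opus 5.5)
We start from an arbitrary disk representation $\mathcal D=\{D_v\}$ of $G$ with centres $c_v$ and radii $r_v$, and repair it by small perturbations. The key observation is that all four conditions, and also "representing $G$", are stable or generic under perturbation once the representation has some slack. The plan is to first create slack by a uniform expansion, and then perturb centres and radii generically.

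\textbf{Step 1 (slack).} Two closed disks intersect iff $|c_u-c_v|\le r_u+r_v$. Finitely many pairs are involved, so let
\[
\eta=\min\{|c_u-c_v|-r_u-r_v : uv\notin E(G)\}>0 .
\]
Replace every radius by $r_v+\eta/4$. Non-edges stay non-adjacent with gap at least $\eta/2$. Every edge now satisfies the strict inequality $|c_u-c_v|<r_u+r_v$, so the interiors of adjacent disks meet. Hence the strict inequalities $|c_u-c_v|<r_u+r_v$ for $uv\in E$ and $|c_u-c_v|>r_u+r_v$ for $uv\notin E$ all hold. These inequalities are open conditions in the parameter vector $(c_v,r_v)_v\in\mathbb R^{3n}$, so there is a neighbourhood $U$ of the current parameters on which every point represents $G$ and satisfies the first part of (i).

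\textbf{Step 2 (genericity).} Inside $U$ we avoid finitely many proper algebraic subvarieties, each of measure zero, so a generic choice exists.
\begin{itemize}
\item[(a)] $c_u=c_v$ with $r_u=r_v$ gives (iii).
\item[(b)] Internal or external tangency, $|c_u-c_v|=|r_u\pm r_v|$. Avoiding it makes every boundary intersection transversal, giving the second part of (i).
\item[(c)] Three circles through a common point, for each triple.
\end{itemize}
For (c), I would argue that the set of parameters for which three given circles share a point is a proper subvariety, of codimension one. Fixing two circles meeting in at most two points, the third circle passes through one of them only on a hypersurface in its own parameters, for instance by varying its radius. Removing (a)–(c) from $U$ leaves a nonempty open set.

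\textbf{Step 3 (condition (iv)).} At a generic point the combinatorial arrangement is determined by which pairs of circles cross (two transversal points), which pairs are nested or disjoint, and the cyclic order of intersection points on each circle. All of these are given by strict inequalities, since tangencies and triple points are excluded and the order of distinct points is preserved under small motion. So the arrangement is locally constant on the open generic set. A small concentric expansion $r_v\mapsto r_v+\varepsilon$ stays inside it, and the arrangement is unchanged.

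The main obstacle is making "combinatorial arrangement" in (iv) precise enough to justify local constancy. I would handle it by noting that the arrangement is determined by the intersection points and their order along each circle. Transversal crossings of circles persist and move continuously under perturbation, by the implicit function theorem applied to the two circle equations. Also, no new intersection points appear, because disjoint or nested pairs satisfy strict inequalities.
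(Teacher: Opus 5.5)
Your proof is correct and follows the paper's plan. A uniform expansion turns every adjacency and non-adjacency into a strict inequality, a small perturbation then avoids tangency, coincidence and triple points, and (iv) follows from the stability of the resulting non-degenerate configuration. The paper is more elementary in two places: it adjusts only radii, one disk at a time, avoiding finitely many forbidden values at each step, and it gets (iv) from the finiteness of the critical expansion values; you instead perturb all centres and radii simultaneously via a measure-zero/openness argument. One small omission: your $\eta$ is undefined when $G$ is complete, in which case any positive expansion works, as the paper notes.
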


\begin{proof}
Let $\mathcal D=\{D_v:v\in V(G)\}$ be an arbitrary disk representation of $G$, where $D_v$ has centre $c_v$ and radius $r_v$. For every $uv\notin E(G)$, $ \|c_u-c_v\|-r_u-r_v>0.$ Since $G$ is finite, the minimum of these quantities over all pairs of distinct nonadjacent vertices is positive. Choose $\delta>0$ smaller than half this minimum and replace each $r_v$ by $r_v+\delta$. If $G$ is complete, choose any $\delta>0$. This keeps the disks corresponding to nonadjacent vertices disjoint, whereas the disks corresponding to adjacent vertices intersect in their interiors. At this point, disks corresponding to adjacent vertices intersect in their interiors, whereas disks corresponding to nonadjacent vertices are separated by a positive distance. Adjust the radii one disk at a time, keeping the disks already considered fixed. For the disk currently under consideration, there are only finitely
many forbidden values of its radius: those for which its boundary is tangent to a previously fixed boundary, the disk coincides with a previously fixed disk, or its boundary passes through an intersection point of two previously fixed boundaries. Choose a radius avoiding these values. Since all changes are sufficiently small, disks corresponding to adjacent vertices
continue to intersect in their interiors, and disks corresponding to nonadjacent vertices remain disjoint. After all radii have been adjusted, no
two boundaries are tangent, no three boundaries contain a common point, and no two disks coincide. Hence the represented graph is unchanged and
conditions~{\rm(i)--(iii)} hold.

It remains to verify condition~{\rm(iv)}. Fix a disk $D\in\mathcal D$ with
centre $c$ and radius $r$. For $\varepsilon\geq0$, let $D^\varepsilon$ denote the concentric disk with radius $r+\varepsilon$, while all other disks
remain unchanged. As $\varepsilon$ varies, the combinatorial arrangement can change only if $\partial D^\varepsilon$ becomes tangent to or coincides with
another boundary, or contains an intersection point of two other boundaries. Since $\mathcal D$ is finite and two circles have at most two common points,
only finitely many values of $\varepsilon>0$ produce one of these events. Conditions~{\rm(i)--(iii)} ensure that none of these events occurs for
$\varepsilon=0$. Hence there exists $\varepsilon_D>0$, smaller than every positive value at which such an event occurs, such that replacing $D$ by
$D^\varepsilon$ for any $0<\varepsilon<\varepsilon_D$ does not change the arrangement of the disks combinatorially. Therefore,
condition~{\rm(iv)} holds for every $D\in\mathcal D$.
\hfill$\square$
\end{proof}

A set of points $\mathcal{P}$ and a family of disks $\mathcal{D}$ in the plane define a  hypergraph, $\mathcal {H}(\mathcal{P}, \mathcal{D})$, where each disk $D \in \mathcal{D}$ defines a hyperedge $\mathcal{P}\cap D$. Given a family $\mathcal{R}$ of red disks and $\mathcal{B}$ of blue disks, we define \emph{intersection hypergraph} $\mathcal{H}(\mathcal{B}, \mathcal{R})$ as follows: the disks $\mathcal{B}$ are the vertices and each disk $W \in \mathcal{R}$ defines a hyperedge $\{ D \in \mathcal{B}: D \cap W \neq \emptyset\}$. A \emph{support} for $\mathcal{H}(\mathcal{B}, \mathcal{R})$ is a graph $S$
on vertex set $\mathcal B$ in which every hyperedge induces a connected subgraph of $S$. $S$ is called a \emph{planar support} if $S$ is planar. Two regions $X, Y \subseteq \mathbb{R}^2$ are said to be \emph{non-piercing} if both $X\setminus Y$ and $Y\setminus X$ are connected. A family $\Gamma$ of regions is said to be \emph{non-piercing} if the regions in $\Gamma$ are pairwise non-piercing. Every family of closed Euclidean disks is non-piercing.

\section{Proof of Theorem 1}\label{sec:proofthm1}

 In this section, we prove Theorem~\ref{thm:disk}. We first establish a bound on the number of conflicts between two induced matchings in a disk graph $G$. We then apply this bound to $L(G)^2$ and obtain a polynomial-time $6$-approximation algorithm that does not require a disk representation of $G$.
 We use the following result of Raman and Ray~\cite{RamanRay2018} to prove Lemma~\ref{lem:ind match}.
\begin{lemma}[\cite{RamanRay2018}]\label{thm:planar-support}
Given two families $\mathcal B$ and $\mathcal R$ of non-piercing regions, the intersection hypergraph $\mathcal{H}(\mathcal{B}, \mathcal{R})$ admits a planar support. 
\end{lemma}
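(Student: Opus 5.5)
We would prove Lemma~\ref{thm:planar-support} by reducing to a combinatorial statement about the planar arrangement of $\mathcal B\cup\mathcal R$. Then we would build the support by induction on the complexity of that arrangement. The main difficulty is a shrinking step that makes the blue regions pairwise disjoint without changing the hypergraph.

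\emph{Normalisation.} First we put the families in general position, in the spirit of Lemma~\ref{lem:gdp}. Each region becomes a closed disk-like set bounded by a simple closed curve. Any two boundaries cross transversally at finitely many points, and no three boundaries share a point. Two conditions must survive this perturbation: the intersection hypergraph $\mathcal H(\mathcal B,\mathcal R)$ is unchanged, and the family stays non-piercing. For disks, which is all Lemma~\ref{lem:ind match} needs, we would slightly enlarge every region, as in Lemma~\ref{lem:gdp}. For general regions we would take a small regular neighbourhood. The arrangement of all boundaries is then a finite plane graph $\mathcal A$, and we measure its complexity by its number of vertices.

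\emph{Base case: pairwise disjoint blue regions.} Suppose the regions of $\mathcal B$ are pairwise disjoint. Contract each blue region $B$ to a point $p_B$; since $B$ is simply connected, the result is still a plane. Each red region becomes a connected set containing exactly the points of the blue regions it meets, so we are in the points-versus-regions setting.

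\begin{enumerate}[label=(\alph*)]
\item The candidate support is the graph on the points $p_B$ with an edge $p_Bp_{B'}$ whenever $p_B$ and $p_{B'}$ lie on a common face after each red region has been shrunk as much as possible. Concretely, we repeatedly remove from a red region $W$ a boundary face that contains no point, as long as $W$ stays connected and non-piercing with respect to the other red regions.
\item When no further shrinking is possible, every red region is a thin connected neighbourhood of a tree through its points.
\item Non-piercing guarantees that two such trees cross at most in a controlled way, with each crossing replaceable by a common vertex. Hence the union of the trees can be redrawn as a planar graph in which every red hyperedge is connected.
\end{enumerate}

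\emph{Inductive step: making the blue regions disjoint.} If two blue regions $B,B'$ overlap, we pick a cell of $\mathcal A$ inside $B\cap B'$ and delete it from one of them, say $B'$. We choose a cell adjacent to $\partial B'$ such that three conditions hold:
\begin{enumerate}[label=(\roman*)]
\item $B'$ stays connected and simply connected;
\item $B'$ still meets every red region it met before;
\item $\mathcal B$ stays non-piercing.
\end{enumerate}
Non-piercing is what makes such a cell exist. Since $B'\setminus B$ is connected, one can argue that some boundary cell of $B'\cap B$ can be peeled off while keeping $B'\setminus B$ nonempty and connected. If every candidate cell is needed by some red region $W$, then $W$ must meet $B$ as well, and we transfer the responsibility for $W$ to $B$. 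This may require adding a new vertex to the support, but the vertex remains planar since it lies in a single cell. Each deletion removes a vertex of $\mathcal A$, so after finitely many steps the blue regions are disjoint. Any support of the shrunk instance is then a support of the original one, because every hyperedge can only have grown back.

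\emph{Main obstacle.} We expect the hardest step to be the shrinking lemma: deleting a cell must preserve non-piercing of the whole modified family (against other blue regions, and keeping the red/blue incidences), not just of one pair. We would first try to prove it with a discharging/extremal argument on the cells of $B\cap B'$ adjacent to $\partial B'$. The idea is to show that if every such cell were indispensable, then $B\setminus B'$ or $B'\setminus B$ would be disconnected, contradicting non-piercing. For disks this step can be made fully explicit, by shrinking the radius of one disk along with a homothety.
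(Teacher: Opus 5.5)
The paper does not prove this lemma. It is quoted from Raman and Ray, and the authors' only work is to check, inside the proof of Lemma~\ref{lem:ind match}, that their particular disk families satisfy Raman--Ray's general-position hypotheses.

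Judged on its own, your sketch has a genuine gap exactly where the theorem has content. A support must have vertex set exactly $\mathcal B$. Yet in step (c) you resolve crossings of red trees by ``a common vertex'', and in the inductive step you allow ``adding a new vertex to the support''. Once such Steiner vertices are allowed, the statement is trivial for arbitrary connected regions. In your points setting, the dual graph of the arrangement of $\mathcal R$, with each point attached to its cell, is planar and connects every hyperedge through cell-vertices. Without non-piercing, however, the real statement is false: take five points and, for each pair, a thin snake containing exactly that pair; this forces $K_5$. So the whole task is to eliminate the point-free cells. Contracting a Steiner vertex into a neighbouring point keeps planarity, but it can disconnect every hyperedge that was routed through that vertex without containing that point. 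Claims (b) and (c), together with the shrinking lemma you flag yourself, are the only places where non-piercing would be used, and all three are asserted rather than proved. For instance, nothing ensures that maximal shrinking ends in thin trees, because it is precisely the non-piercing constraints with the other red regions that block further shrinking.

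Two of your reductions also fail as stated.
\begin{enumerate}
\item Contracting the disjoint blue regions to points does not put you in the points-versus-non-piercing-regions setting, even when every region is a disk. Let $W,W'$ be overlapping red disks, and let $B$ be a blue disk that cuts the crescent $W\setminus W'$ into two pieces and meets $W'$ (e.g.\ centred in the middle of the crescent, with radius a little larger than its width). Contract $B$ to $p_B$, which lies in the images of both $W$ and $W'$. The image of $W$ minus the image of $W'$ is then $(W\setminus W')\setminus B$, which has two components. So the red family becomes piercing, and (a)--(c) no longer apply.
\item ``Transferring responsibility'' for $W$ from $B'$ to $B$ removes $B'$ from the hyperedge of $W$. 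A support of the reduced instance then need not be a support of the original one, because a set inducing a connected subgraph need not stay connected when a vertex is added to it. Thus ``every hyperedge can only have grown back'' points in the wrong direction. You would have to keep condition (ii) unconditionally, or force the edge $BB'$ into the reduced support, and neither is done.
\end{enumerate}

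There are also two lesser problems. The graph in (a), which joins all points on a common face, already contains $K_5$ when one face holds five points. And deleting a cell makes boundaries overlap along arcs, so general position is lost and the vertex count of $\mathcal A$ is not an evident termination measure.
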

We now bound the number of conflicts between two induced matchings of a disk
graph.
\begin{lemma}\label{lem:ind match}
Let $A$ and $B$ be two nonempty disjoint induced matchings in a disk graph $G$, and let $J$ be the bipartite graph with partite sets $A$ and $B$ where $e \in A$ and $f \in B$ are adjacent if $e$ and $f$ conflict. Then $|E(J)|\leq3\bigl(|A|+|B|\bigr)-4$.
\end{lemma}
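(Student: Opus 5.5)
Fix a disk representation of $G$ in general position (Lemma~\ref{lem:gdp}). The plan is to turn each edge of $A$ and of $B$ into a single non-piercing region, apply Lemma~\ref{thm:planar-support}, and count conflicts as edges of a planar graph.

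For an edge $e=uv$, let $R_e=D_u\cup D_v$. This set is connected, since $D_u\cap D_v\neq\emptyset$.

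\textbf{Step 1: conflicts become intersections.} For $e\in A$ and $f\in B$, I claim $e$ and $f$ conflict if and only if $R_e\cap R_f\neq\emptyset$. If $e$ and $f$ share an endpoint, or are joined by an edge of $G$, then some disk of $e$ meets some disk of $f$. Conversely, if two such disks meet, the corresponding vertices are equal or adjacent, so $e$ and $f$ conflict.

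\textbf{Step 2: regions within one matching.} Because $A$ is an induced matching, distinct $e,e'\in A$ give disjoint regions $R_e,R_{e'}$. Disjoint regions are trivially non-piercing, so $\mathcal B=\{R_e:e\in A\}$ is a non-piercing family. Likewise $\mathcal R=\{R_f:f\in B\}$ is non-piercing. Note that a union of two disks is not in general non-piercing with respect to arbitrary regions; Lemma~\ref{thm:planar-support} only needs non-piercing within each family, which is what we have.

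\textbf{Step 3: planar support and vertex splitting.} Lemma~\ref{thm:planar-support} gives a planar support $S$ on $\mathcal B$ for $\mathcal H(\mathcal B,\mathcal R)$. This alone does not bound the number of incidences, so I would run a symmetric construction. Each hyperedge $N(f)\subseteq A$ induces a connected subgraph of $S$.
\begin{itemize}
\item Since the regions of $\mathcal B$ are pairwise disjoint, they form a planar map.
\item Each $R_f$ is a connected set meeting exactly the regions $N(f)$.
\item Contract each $R_f$ to a point and join it to every $R_e$ it meets, routing through the interior of $R_f$.
\end{itemize}
If the regions $R_f$ are also pairwise disjoint, this bipartite incidence graph between $A$ and $B$ should be planar. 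The expected argument is: contract each connected $R_e$ to a point placed in $R_e\setminus\bigcup_f R_f$ if possible, else split it. Once planarity holds, Euler's formula for a bipartite planar graph on $n=|A|+|B|\ge 3$ vertices gives $|E(J)|\le 2n-4$, which is stronger than needed.

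\textbf{Main obstacle.} The difficulty is that $R_e$ and $R_f$ can overlap in complicated ways, crossing several times. A naive drawing of the incidence graph can then create crossings, which is exactly why the planar-support theorem is invoked.

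The fallback, which I expect the authors use, is the following.
\begin{itemize}
\item The incidence graph $J$ need not be planar, but it can be charged to the planar support $S$ on the vertex set $A\cup B$ of all regions, taking the hyperedges to be the regions meeting a fixed region.
\item More concretely: for each $f$, root the connected subgraph $S[N(f)]$ at one vertex $e_f$.
\item Charge the edge $fe_f$ once, and charge each remaining pair $(f,e)$ to an edge of a spanning tree of $S[N(f)]$ together with a face of $S$.
\end{itemize}
A planar graph on $n$ vertices has at most $3n-6$ edges. Adding the $|B|$ root connections, and controlling multiplicities with a discharging or face-counting argument, should yield $3(|A|+|B|)-4$. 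Making this charging injective is the key step I would verify most carefully.
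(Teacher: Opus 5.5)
\textbf{There is a genuine gap.} Steps~1 and~2 are correct, but Step~3 rests on a false planarity claim, and the fallback that replaces it is not an argument.

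\textbf{The planarity claim in Step~3 is false.} You assert that the bipartite intersection graph of two families of pairwise disjoint regions $R_e=D_u\cup D_v$ is planar. A union of two disks can pierce another such union, so the pseudo-disk reasoning you have in mind does not apply. Here is an explicit counterexample.
\begin{itemize}
\item Let $e_1$ be given by the two disks of radius $1.1$ centred at $(\pm1,0)$, and $f_1$ by the two disks of radius $1.1$ centred at $(0,\pm1)$.
\item Let $e_2,e_3$ each be given by two concentric disks of radii $8.5$ and $8.4$ centred at $(0,\pm10)$, and let $f_2,f_3$ be given in the same way centred at $(\pm10,0)$.
\end{itemize}
Then $A=\{e_1,e_2,e_3\}$ and $B=\{f_1,f_2,f_3\}$ are induced matchings, since the relevant centre distances are $\sqrt{101}>9.6$ and $20>17$. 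On the other hand, every $e_i$ meets every $f_j$, since the relevant distances are $\sqrt2<2.2$, $9<9.6$ and $10\sqrt2<17$. So $J\cong K_{3,3}$, which has $9>2\cdot 6-4$ edges. The bound $2(|A|+|B|)-4$ you expected is therefore false, which is exactly why the lemma only claims $3(|A|+|B|)-4$. In this example $R_{e_1}$ and $R_{f_1}$ form a plus sign, and $R_{e_1}\setminus R_{f_1}$ has two components.

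\textbf{The fallback does not close the gap.} There are two problems.
\begin{itemize}
\item A support on vertex set $A\cup B$, with hyperedges given by the regions meeting a fixed region, needs the single family $\{R_g:g\in A\cup B\}$ to be non-piercing. The example above shows it is not.
\item More fundamentally, connectivity of hyperedges in a planar support does not bound $\sum_f|N_J(f)|$, because arbitrarily many $f$ can use the same support edge (think of $J=K_{2,m}$). So a spanning-tree charging cannot be made injective without further geometry, and that is precisely the step you leave open.
\end{itemize}

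\textbf{The missing idea.} The paper takes the individual endpoint disks as support vertices; these are genuinely non-piercing. It then adds tiny probe disks as follows.
\begin{itemize}
\item First peel off the containment cases $R_e\subseteq R_f$ or $R_f\subseteq R_e$; each such case yields a degree-one vertex of $J$.
\item For every remaining conflict $ef$ with no shared endpoint, place a disk $W_{ef}$ at a crossing point of $\partial R_e$ and $\partial R_f$. It meets exactly one disk of $e$ and one disk of $f$, so it forces a dedicated support edge for that conflict.
\item For each edge $e=uv$ not involved in a shared-endpoint conflict, place a disk $U_e\subseteq\operatorname{int}(D_u\cap D_v)$. It forces the two disks of $e$ to be joined inside a set containing at most one other pair.
\end{itemize}
Let $h$ be the number of conflicts with a shared endpoint. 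From the resulting support one extracts a triangle-free planar graph $T$ on $2(|A|+|B|)-2h$ vertices with at least $|E(J)|+|A|+|B|-3h$ edges. The bound $|E(T)|\le 2|V(T)|-4$ then gives the lemma.
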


\begin{proof}
If $|A|+|B|=2$, then $|E(J)|\leq1$, and the result follows. Hence assume that $|A|+|B|\geq3$. For a disk graph $G=(V(G),E(G))$, fix a disk representation $\mathcal D=\{D_v:v\in V(G)\}$, where $uv\in E(G)$ exactly when $D_u\cap D_v\ne\emptyset$. For an edge $e=uv$, we define  $R_e=D_u\cup D_v$. $\partial R_e$ is defined as the boundary of  $R_e$. Suppose that $R_e\subseteq R_f$ for some $e\in A$ and $f\in B$. Since  $B$ is an induced matching we have that, $R_e\cap R_g\subseteq R_f\cap R_g=\emptyset$ for every
$g\in B\setminus\{f\}$. Thus $e$ has degree exactly one in $J$. 
For an edge $ef\in E(J)$, remove $e$ from $J$ if $R_e\subseteq R_f$. Similarly, we can remove $f$ from $J$ if $R_f\subseteq R_e$. By the preceding observation, this vertex has degree one in the current graph. Continue this process until every remaining edge $ef\in E(J)$ satisfies
$R_e\nsubseteq R_f$ and $R_f\nsubseteq R_e$. Every deletion removes exactly one vertex and one edge from $J$. If one partite set becomes empty, at most $|A|+|B|-1$ edges of $J$ have
been removed, and therefore
$|E(J)|\leq |A|+|B|-1\leq3(|A|+|B|)-4$ .
It remains to consider the case in which both induced matchings are nonempty and for every $ef\in E(J)$, neither $R_e\subseteq R_f$ nor $R_f\subseteq R_e$. We continue to denote the remaining induced matchings by $A$ and $B$. After deletion if $|A|+|B|=2$, then $|E(J)|\leq1\leq3(|A|+|B|)-4$ and restoring the deleted vertices as at the end of the proof preserves the inequality. Hence, for the remainder of the proof, we  assume that $|A|+|B|\geq3$.

Let $h$ be the number of edges $ef\in E(J)$ such that $e \in A$ and $f \in B$ have a common endpoint in $G$. These $h$ edges form a matching in $J$. Hence they have exactly $2h$ distinct endpoints in $A\cup B$.

We now construct an intersection hypergraph whose planar support will be used to bound $|E(J)|$. Consider the disks corresponding to the distinct endpoints of the edges in $A\cup B$. Since $A$ and $B$ are matchings and exactly $h$ vertices are common endpoints of an edge of $A$ and an edge of $B$, there are $2(|A|+|B|)-h$
such disks. Delete $h$ disks corresponding to these common endpoints, and let $\mathcal B$ be the family of the remaining disks. Hence $|\mathcal B|=2(|A|+|B|)-2h$. Every disk in $\mathcal B$ is associated with exactly one edge of $A\cup B$. Moreover, if $v$ is a common endpoint of $e\in A$ and $f\in B$, then $D_v$ is disjoint from both $D_a$ and $D_b$ for every edge $ab\in(A\setminus\{e\})\cup(B\setminus\{f\})$.

For every edge $e=uv\in E(G)$, the disks $D_u$ and $D_v$ intersect in their interiors, so the boundary of $R_e=D_u\cup D_v$ is a simple closed curve. Consider an edge $ef\in E(J)$, where $e=uv\in A$ and $f=xy\in B$, such that $e$ and $f$ have no common endpoint. Since $e$ and $f$ conflict, $R_e\cap R_f\neq\emptyset$. Moreover, neither $R_e\subseteq R_f$ nor
$R_f\subseteq R_e$. If $\partial R_e$ and $\partial R_f$ were disjoint, then the two regions would either be disjoint or one would be contained in the other. Therefore, $\partial R_e\cap\partial R_f\neq\emptyset$. Choose a point $p\in\partial R_e\cap\partial R_f$. The point $p$ lies on the boundary of exactly one of $D_u,D_v$ and exactly one of $D_x,D_y$ by Lemma~\ref{lem:gdp}. After interchanging the endpoints if necessary, assume that $p\in\partial D_u\cap\partial D_y$.
We next show that $D_u,D_y\in\mathcal B$. Suppose that the disk $D_u$ associated with $e$ was deleted. Then $u$ is a common endpoint of $e$ and some edge $g\in B \setminus \{f\}$. As $B$ is an induced matching, $D_u$ is disjoint from both $D_x$ and $D_y$, contradicting $p\in D_u\cap D_y$. Hence $D_u\in\mathcal B$ and also $D_y\in\mathcal B$. Furthermore, $p\notin D_v$ and $p\notin D_x$. Otherwise, $p$ would either lie in the interior of $R_e$ or $R_f$, contradicting $p\in\partial R_e\cap\partial R_f$, or belong to three disk
boundaries, contradicting Lemma~\ref{lem:gdp}. Moreover, $p$ belongs to no other disk of $\mathcal B$ as $A$ and $B$ are induced matchings. Since $\mathcal B$ is finite, there exists a sufficiently small closed disk
$W_{ef}$ centred at $p$ that intersects exactly $D_u$ and $D_y$ among the disks in $\mathcal B$. Consequently, the hyperedge determined by $W_{ef}$ is $\{D_u,D_y\}$.

\begin{figure}[ht]
    \centering
    \includegraphics[width=0.5\linewidth]{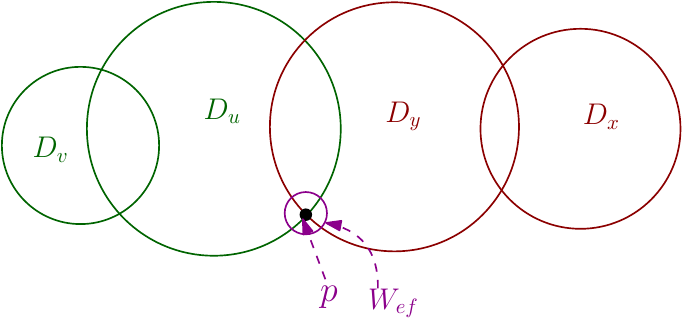}
    \caption{$e=uv \in A$, $f=xy \in B$, the disk $W_{ef}$ centred at $p \in \partial D_u \cap \partial D_y $.}
    \label{W_ef}
\end{figure}

Now let $e=uv\in A\cup B$ be an edge that is not incident with any of the $h$ edges of $J$ corresponding to a pair with a common endpoint. Then both $D_u$ and $D_v$ belong to $\mathcal{B}$. Choose a point $z\in\operatorname{int}(D_u\cap D_v)$ and take a sufficiently small closed disk $U_e$ centred at $z$ such that $U_e\subseteq\operatorname{int}(D_u\cap D_v)$.
Let $e \in A$, $f=xy, g=ab \in B$, then $R_f$ and $R_g$ are disjoint. Since the matching is finite, $U_e$
can be chosen sufficiently small so that $U_e \cap R_f \neq \emptyset$ for at most one such edge $f \in B$. Any disk $D_x$, $D_y$ intersected by $U_e$ belongs to $\mathcal{B}$. Consequently, the hyperedge defined by $U_e$ has one of the forms
$\{D_u,D_v\}, \{D_u,D_v,D_x\}, \{D_u,D_v,D_x,D_y\}$.
Whenever an edge $f$ occurs in this hyperedge, $ef\in E(J)$, because every point of $U_e\cap D_x$ or $U_e\cap D_y$ belongs to both $D_u$ and $D_v$. Furthermore, $e$ and $f$ have no common endpoint by the choice of $e$.
\begin{figure}[ht]
    \centering
\includegraphics[width=0.4\linewidth]{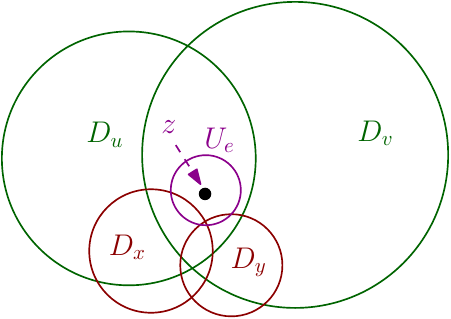}
    \caption{$e=uv \in A$, the disk $U_e$ is contained in $\operatorname{int}(D_u\cap D_v)$ and may intersect disks corresponding to at most one edge $f=xy \in B$. }
    \label{U_e}
\end{figure}

Let $\mathcal{R}$ consist of all disks $W_{ef}$ chosen for the edges $ef\in E(J)$ whose corresponding edges of $G$ have no common endpoint, together with all disks $U_e$ chosen for the elements of $A\cup B$ that are not
incident with any of the above $h$ edges of $J$. By Lemma~\ref{lem:gdp}, disks in $\mathcal B$ are in
general position. The choices of the  disks $W_{ef}$ and $U_e$ are not unique: their centres are fixed, but their radii may be chosen sufficiently small. We make these choices so that the properties of these disks given above are preserved and the resulting disks are in general position. Choose these radii one at a time. Once the preceding radii have been fixed, only finitely many values of the next radius create a
tangency, coincident boundaries, or a boundary passing through an intersection point of two previously fixed boundaries. Avoiding these values does not affect the properties of these disks established above. Consequently, the boundaries of any two disks in $\mathcal B\cup\mathcal R$ cross whenever they intersect, no three boundaries contain a common point, and no two disks coincide. Since $\mathcal B\cup\mathcal R$ is finite, every disk admits a sufficiently small positive expansion that does not change the arrangement
of these disks combinatorially. Thus, $\mathcal B\cup\mathcal R$ satisfies the general-position assumptions of ~\cite{RamanRay2018}. Both $\mathcal B$ and $\mathcal R$ are families of Euclidean disks and hence are non-piercing. Therefore, by Lemma~\ref{thm:planar-support}, their intersection hypergraph admits a planar support $S$ with vertex set $\mathcal B$.
We may take $S$ to be simple. Every disk $W_{ef}$ defines a hyperedge of size two ${D_u,D_y}$. Since this hyperedge must induce a connected subgraph of $S$, it follows that $D_u-D_y\in E(S)$.
Let $Q$ be the set of elements $e=uv\in A\cup B$ that are not incident with any of the above $h$ edges of $J$ and for which $D_u - D_v\in E(S)$.
For every edge $e=uv\in A\cup B$ that is not incident with any of the above $h$ edges of $J$ and does not belong to $Q$, choose a shortest path from $D_u$ to $D_v$ in the subgraph of $S$ induced by the disks in $\mathcal B$ that intersect $U_e$. By the possible forms of the hyperedge defined by $U_e$, this path has the form $D_u-D_x-D_v$ or $D_u-D_x-D_y-D_v$, where $f=xy$ belongs to the matching other than the one containing $e$. We say that $e$ selects $f$. In the path $D_u-D_x-D_y-D_v$ , $D_x-D_y\in E(S)$, and hence $f\in Q$. Since one path is selected for each relevant edge outside $Q$, the number of
selections is $|A| + |B| - 2 h - |Q|$.

Now construct a graph $T$ from $S$ such that $V(T)=V(S)$ and select $E(T)$ as follows:
\begin{enumerate}
\item For every edge $e=uv\in Q$, include $D_u-D_v$ in $E(T)$. \label{const:1}
\item  For every selected path corresponding to a selection between $e=uv$ and $f=xy$, include its two edges having one endpoint in $\{D_u,D_v\}$ and the other in $\{D_x,D_y\}$. \label{const:2}
\item For every edge $ef\in E(J)$, where $e=uv$ and $f=xy$ have no  common endpoint in $G$, include the edge of $S$ determined by $W_{ef}$ if  neither
$e$ selects $f$ nor $f$ selects $e$. \label{const:3}
\end{enumerate}
Since $T$ is a subgraph of $S$, it is planar.

\medskip
\refstepcounter{claim}
\noindent\textbf{Claim \theclaim. }\label{claim:1}
$|E(T)|\geq |E(J)|+|A|+|B|-3h$.

\smallskip
\noindent\emph{Proof of claim. }
Fix an edge $ef\in E(J)$ such that $e=uv$ and $f=xy$ have no common endpoint in $G$. If neither $e$ selects $f$ nor $f$ selects $e$, then construction \ref{const:3} adds one edge of $S$ having one endpoint in $\{D_u,D_v\}$ and other endpoint in $\{D_x,D_y\}$. If exactly one of $e$ and $f$ selects the other, the corresponding chosen
path contributes two distinct edges of $T$, each having one endpoint in $\{D_u,D_v\}$ and the other in $\{D_x,D_y\}$. Now suppose that $e$ selects $f$ and $f$ selects $e$. Then $e,f\notin Q$. Both chosen paths consequently have length two, because a chosen path of length three between $D_u$ and $D_v$ through both $D_x$ and $D_y$ would contain the edge $D_x-D_y$, implying $f\in Q$. The two length-two paths together contain at least three distinct edges between
$\{D_u,D_v\}$ and $\{D_x,D_y\}$. Thus, each of the $|E(J)|-h$ edges of $J$ considered above gives
one edge of $T$, and each selection of an
edge of the opposite matching by a chosen path gives one 
edge. The number of selections is
$|A|+|B|-2h-|Q|$. Hence $T$ contains at least
$|E(J)|+|A|+|B|-3h-|Q|$ edges joining a disk associated with an edge of $A$ to a disk associated with an edge of $B$. Adding the $|Q|$ edges included by
construction \ref{const:1} proves the claim.
\hfill$\triangleleft$

\medskip
\refstepcounter{claim}
\noindent\textbf{Claim \theclaim. }\label{claim:2}
$T$ is triangle-free.

\smallskip
\noindent\emph{Proof of claim. }
By the construction of $\mathcal B$, every vertex of $T$ is associated with exactly one edge of $A\cup B$. Colour each vertex according to whether its associated edge belongs to $A$ or to $B$. Every edge included in constructions \ref{const:2} and \ref{const:3} of $T$
joins vertices of different colours. The edges included in construction \ref{const:1} join vertices of the same colour and they form a matching. Suppose $e=uv \in A$ and $f=xy \in B$. Therefore, any triangle in $T$ would contain exactly one edge $D_u-D_v$ arising
from some $e\in Q$ and any one disk from $\{D_x, D_y\}$ of edge $f$ adjacent to both $D_u$ and $D_v$. Since $e\in Q$, the edge $e$ does not select $f$. If $f$ does not select $e$, construction~\ref{const:3} includes only one edge having one endpoint in $\{D_u,D_v\}$ and the other in $\{D_x,D_y\}$. If $f$ selects $e$, then, whether its selected path has length two or three, the two edges of that path having one endpoint in $\{D_u,D_v\}$ and the other in $\{D_x,D_y\}$ are incident with different vertices of $\{D_x,D_y\}$. Hence neither $D_x$ nor $D_y$ is adjacent in $T$ to both $D_u$ and $D_v$. Thus, $T$ contains no triangle.
\hfill$\triangleleft$

\medskip
The graph $T$ has $2(|A|+|B|)-2h$ vertices. Since the $h$ edges form a matching and $|A|+|B|\geq 3$, this number is at least three. The graph $T$ is
simple, planar, and triangle-free (from Claim \ref{claim:2}), so $|E(T)|\leq 2|V(T)|-4=4(|A|+|B|)-4h-4$.
Together with Claim \ref{claim:1}, this gives $|E(J)|+|A|+|B|-3h \leq 4(|A|+|B|)-4h-4$, and therefore
$|E(J)|\leq 3(|A|+|B|)-h-4 \leq 3(|A|+|B|)-4$.

Finally, again add the degree-one vertices which were deleted at the beginning. Each added vertex increases $|E(J)|$ by one and increases $|A|+|B|$ by one; hence the
inequality remains valid after every addition of such vertices. This completes the proof.
\hfill $\square$
\end{proof}

We next use Lemma~\ref{lem:ind match} to bound $\delta^*(H)$, where $H=L(G)^2$. For an arbitrary induced subgraph $H[F]$, we partition $F$ according to a fixed $\chi(H)$-colouring of $H$ and apply the lemma to
each pair of nonempty colour classes. 

\begin{theorem}\label{thm:disk-degeneracy} 
For a disk graph $G$ with $\chi'_s(G)\ge2$, 
$\delta^*(L(G)^2)\le6(\chi'_s(G)-1)-1$.
\end{theorem}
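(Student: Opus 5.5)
Let $H=L(G)^2$ and $k=\chi'_s(G)=\chi(H)$. Fix an optimal proper colouring of $H$ with colour classes $M_1,\dots,M_k$; each $M_i$ is an induced matching of $G$. The plan is to show that every induced subgraph $H[F]$ with $F\neq\emptyset$ has a vertex of degree at most $6(k-1)-1$. We do this by bounding the number of edges of $H[F]$ and averaging.

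Fix a nonempty $F\subseteq E(G)$ and set $F_i=F\cap M_i$. Let $I=\{i:F_i\neq\emptyset\}$ and $t=|I|$. Each $F_i$ is a subset of an induced matching, so it is itself an induced matching, and there are no edges of $H$ inside any $F_i$. Hence every edge of $H[F]$ joins $F_i$ and $F_j$ for some distinct $i,j\in I$. For each such pair, Lemma~\ref{lem:ind match} applied to the disjoint nonempty induced matchings $F_i,F_j$ bounds the conflicting pairs by $3(|F_i|+|F_j|)-4$. We sum over all unordered pairs in $I$. Each $|F_i|$ appears in $t-1$ pairs, so
\[
|E(H[F])|\le 3(t-1)|F|-4\binom{t}{2}=3(t-1)|F|-2t(t-1).
\]

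If $t=1$, then $H[F]$ has no edges and its minimum degree is $0\le 6(k-1)-1$, using $k\ge2$. If $t\ge2$, the average degree is $2|E(H[F])|/|F|\le 6(t-1)-4t(t-1)/|F|$, and the subtracted term is strictly positive. So the average degree is strictly less than $6(t-1)\le 6(k-1)$. Since degrees are integers, some vertex of $H[F]$ has degree at most $6(k-1)-1$. Taking the maximum over all $F$ gives $\delta^*(H)\le 6(\chi'_s(G)-1)-1$.

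The argument is routine once Lemma~\ref{lem:ind match} is available. The only points needing care are that the lemma is applied only to pairs of \emph{nonempty} classes, which is why we sum over pairs in $I$, and that the $-4$ per pair is what makes the bound strict, so the integer minimum degree drops to $6(k-1)-1$. The hypothesis $\chi'_s(G)\ge2$ is used only in the trivial case $t=1$, where it ensures the bound $6(k-1)-1$ is nonnegative.
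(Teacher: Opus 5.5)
Your proof is correct and follows the paper's argument essentially verbatim: you partition $F$ by an optimal colouring of $L(G)^2$, apply Lemma~\ref{lem:ind match} to each pair of nonempty classes to get $|E(H[F])|\le 3(t-1)|F|-4\binom{t}{2}$, and conclude that the average degree is strictly below $6(t-1)\le 6(k-1)$, so some vertex has degree at most $6(k-1)-1$. Your separate treatment of $t=1$, where $k\ge 2$ makes $6(k-1)-1\ge 0$, corresponds to the paper's remark that $H[F]$ is then edgeless.
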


\begin{proof}
Let $H=L(G)^2$, and let $\chi(H)$ be the chromatic number\footnote{The chromatic number $\chi(H)$ is the minimum number of colours required to colour the vertices of $H$ so that adjacent vertices receive distinct colours}  of $H$ and  let $\chi(H)=\chi'_s(G)= k$. 
Let $F$ be a non-empty subset of the vertex set $V(H) = E(G)$ and consider the induced subgraph $H[F]$. Since $\chi(H)=k$, assign to each vertex of $H$ one of $k$ colours so that no two adjacent vertices receive the same colour. Partition $F$ into its non-empty colour classes $F_1,\dots,F_r$, where $r\leq k$, according
to this assignment. Each $F_i$ is contained in a colour class of $H$ and hence is an induced matching in $G$.
If $r=1$, then $H[F]$ is edgeless.
For $r\geq2$, let $J_{ij}=(F_i,F_j,E_{ij})$ be the bipartite graph, where $ E_{ij}=\{xy\in E(H):x\in F_i,\ y\in F_j\}$ for each $1 \leq i < j \leq r$. Since each $F_i$ is an independent set in $H$, the sets $E_{ij}$, over all $1\leq i<j\leq r$, form a partition of $E(H[F])$. Applying Lemma~\ref{lem:ind match} to the induced matchings $F_i$ and $F_j$, we obtain
$
|E(H[F])|
=\sum_{1\leq i<j\leq r}|E(J_{ij})|
\leq\sum_{1\leq i<j\leq r}
\bigl(3(|F_i|+|F_j|)-4\bigr)
=3(r-1)|F|-4\binom{r}{2}<3(r-1)|F|.$
Here, $\sum_{1\le i<j\le r}(|F_i|+|F_j|)
=(r-1)\sum_{i=1}^{r}|F_i|=(r-1)|F|$,
because each $|F_i|$ occurs once for each of the other $r-1$ colour classes. Hence the average degree\footnote{The average degree of a nonempty finite graph is the sum of the degrees of its vertices divided by the number of vertices.} of $H[F]$ is strictly less
than $6(r-1)\leq6(k-1)$. Therefore, $H[F]$ contains a vertex of degree
at most $6(k-1)-1$. Since $F$ was arbitrary, every nonempty induced subgraph of $H$
contains such a vertex. Consequently,
$\delta^*(H)\leq6(k-1)-1$.
\hfill $\square$
\end{proof}
The following result immediately follows from Theorem~\ref{thm:disk-degeneracy} and yields a polynomial-time strong edge
colouring algorithm that does not require a disk representation.

\begin{corollary}\label{cor:disk-algorithm}
There is a polynomial-time algorithm on a disk graph
$G$ using at most $\max\{1,6(\chi'_s(G)-1)\}$ colours in a strong edge colouring. Moreover, the algorithm does not require a disk representation of $G$.
\end{corollary}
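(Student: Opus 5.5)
The plan is to run the standard smallest-last greedy colouring on $H=L(G)^2$, built directly from the abstract graph $G$, and to get the colour bound from Theorem~\ref{thm:disk-degeneracy}. No disk representation is used anywhere: the algorithm only reads the adjacency structure of $G$, and the disk representation appears only in the analysis, through Theorem~\ref{thm:disk-degeneracy} and Lemma~\ref{lem:ind match}.

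\emph{Computing $H$.} Given $G$ with $n$ vertices and $m$ edges, compute $H$ in polynomial time. Two edges $e,f$ are adjacent in $H$ exactly when they share an endpoint, or some edge of $G$ joins an endpoint of $e$ to an endpoint of $f$. For each pair this is checked in constant time with an adjacency matrix, so building $H$ takes $O(m^2)$ time.

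\emph{Ordering and colouring.} Compute a degeneracy ordering of $H$. Repeatedly remove a vertex of minimum degree in the current induced subgraph, record the order of removal, and reverse it to get $e_1,\dots,e_m$. Each $e_i$ then has at most $\delta^*(H)$ neighbours among $e_1,\dots,e_{i-1}$. This holds because, at the moment $e_i$ was removed, it had minimum degree in the induced subgraph $H[\{e_1,\dots,e_i\}]$, and that minimum is at most $\delta^*(H)$ by the definition of degeneracy. Now colour $e_1,\dots,e_m$ greedily, giving each edge the smallest positive integer not used on its earlier neighbours. This is a proper colouring of $H$, which is exactly a strong edge colouring of $G$, and it uses at most $\delta^*(H)+1$ colours.

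\emph{Colour count.} Split into two cases according to $\chi'_s(G)$.
\begin{itemize}
\item If $\chi'_s(G)\ge2$, Theorem~\ref{thm:disk-degeneracy} gives $\delta^*(H)+1\le 6(\chi'_s(G)-1)$.
\item If $\chi'_s(G)\le1$, then $H$ is edgeless (including the trivial case $E(G)=\emptyset$). Every degree is $0$, so greedy uses at most $1$ colour.
\end{itemize}
In both cases at most $\max\{1,6(\chi'_s(G)-1)\}$ colours are used.

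The only substantive ingredient is Theorem~\ref{thm:disk-degeneracy}, and it is already available, so nothing here is a real obstacle. The one point to be careful about is that the ordering be computed by repeated minimum-degree deletion. An arbitrary ordering does not guarantee at most $\delta^*(H)$ earlier neighbours per vertex, so greedy colouring along it would not give the stated bound.
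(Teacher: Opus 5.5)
Your proposal is correct and follows the paper's proof: you build $L(G)^2$ directly from the adjacency structure of $G$, take the reverse of a minimum-degree deletion ordering, colour greedily, and invoke Theorem~\ref{thm:disk-degeneracy} when $\chi'_s(G)\ge 2$, handling the edgeless case separately. You also make explicit two points the paper leaves implicit: why this ordering gives each edge at most $\delta^*(H)$ earlier neighbours, and the trivial case $E(G)=\emptyset$.
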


\begin{proof}
Construct $H=L(G)^2$, obtain an ordering by repeatedly deleting a vertex of minimum degree, and greedily colour the vertices in reverse order. The greedy colouring procedure assigns distinct colours to adjacent vertices of $H$ and therefore yields a strong edge colouring of $G$. If $\chi'_s(G)=1$, then $H$ is edgeless and one
colour is optimal. If $k=\chi'_s(G)\ge2$, Theorem~\ref{thm:disk-degeneracy} implies that
each vertex has at most $6(k-1)-1$ previously coloured neighbours when it is coloured. Hence, the algorithm uses at most $6(k-1)$ colours.

The algorithm does not require a disk representation. Given an adjacency matrix of $G$, whether two edges conflict in a strong edge colouring can be determined in constant
time. Thus $L(G)^2$ can be constructed in $O(|V(G)|^2+|E(G)|^2)$ time. Once $L(G)^2$ has been
constructed, an ordering and the greedy colouring can be computed in $O(|E(G)|^2)$ time. 
\hfill $\square$
\end{proof}

We can now complete the proof of Theorem~\ref{thm:disk}.

\noindent\textit{Proof of Theorem~\ref{thm:disk}.}
If $\chi'_s(G)=1$, the unique colour gives an optimal solution. Otherwise put
$k=\chi'_s(G)\ge2$. By Corollary~\ref{cor:disk-algorithm}, the algorithm uses
at most $6(k-1)$ colours. Hence the algorithm is a polynomial-time $6$-approximation for Strong Edge
Colouring on disk graphs.
\hfill $\square$

\section{Proof of Theorem 2}\label{sec:proofthm2}
In this section, we prove Theorem~\ref{thm:unit disk} by establishing a stronger bound for the strong list chromatic index $\chi'_{s,\ell}$. For each edge $e=uv$, we examine
the subgraph induced by the vertices adjacent to $u$ or $v$ and establish the clique-partition and edge-count bounds needed in the proof. We then order the edges according to the number of copies of $K_4$ containing them and bound the number of earlier conflicting edges. Applying greedy list colouring
to this ordering yields the claimed bound.
Throughout this section, we identify each vertex of $G$ with the centre of its representing disk. Since all representing disks have the same radius, we may scale the representation so that
$uv\in E(G)$ iff $\|u-v\|\leq1$.
We use the following result of Grelier et al.~\cite{grelier2019approximate} in Lemma~\ref{lem:udg-independence}.
\begin{lemma}[\cite{grelier2019approximate}]\label{lem:udg-independence}
    Let $x_1$ and $x_2$ be two points in $\mathbb{R}^2$ within Euclidean distance 1. Let $Y$ be a collection of points in $\mathbb{R}^2$ pairwise of Euclidean distance greater than $1$ such that either $y$ and $x_1$ or $y$ and $x_2$ are within Euclidean distance $1$ for any $y \in Y$. Then $|Y| \leq 8$. 
\end{lemma}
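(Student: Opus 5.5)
The plan is a packing argument in the union of two unit disks, following Grelier et al. Let $X=\{x_1,x_2\}$ with $\|x_1-x_2\|\le 1$. Every $y\in Y$ lies in the region $\Omega=B(x_1,1)\cup B(x_2,1)$, where $B(x,1)$ is the closed unit disk. The points of $Y$ are pairwise at distance greater than $1$. So I will bound how many points with pairwise distances exceeding $1$ fit into $\Omega$.

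\textbf{Step 1: reduce to the two disks separately.} A single closed unit disk contains at most $5$ points with pairwise distances greater than $1$. Indeed, the centre itself can hold at most one such point, and among points other than the centre, any two at angular separation at most $60^\circ$ (as seen from the centre) are within distance $1$ of each other. This gives at most $5$ non-central points, and any point at the centre excludes all others. So each of $Y\cap B(x_1,1)$ and $Y\cap B(x_2,1)$ has at most $5$ points.

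\textbf{Step 2: use the overlap.} The naive bound from Step 1 is $10$, so the overlap of the two disks must be exploited. I would take the line $\ell$ through $x_1,x_2$ and the perpendicular bisector $m$ of $x_1x_2$. Points of $Y$ on the $x_1$ side of $m$ that are not within $1$ of $x_1$ would have to lie within $1$ of $x_2$ while being farther from $x_2$ than from $x_1$, which is impossible. Hence each point of $Y$ lies in the closed unit disk about the nearer of $x_1,x_2$, so $\Omega$ splits into two half-regions $\Omega_i=B(x_i,1)\cap\{\text{closer to }x_i\}$.

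\textbf{Step 3: angular counting in each half.} For each $i$, measure the angles of points of $Y\cap\Omega_i$ around $x_i$. The half-region $\Omega_i$ sees the side of $m$ away from $x_{3-i}$ across a full $180^\circ$ outward arc. The part near $m$ is cut off, and the angular range in which points of $\Omega_i$ can lie at distance close to $1$ from $x_i$ is at most $180^\circ+2\arccos(d/2)$, where $d=\|x_1-x_2\|$. As in Step 1, two points of $Y$ in $\Omega_i$ must be separated by more than $60^\circ$ around $x_i$. This yields at most $\lceil(180^\circ+2\arccos(d/2))/60^\circ\rceil$ points, roughly $5$ for each side, which is still too many.

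\textbf{Step 4: refine the count near $m$.} The counts from the two sides must therefore be combined. Points of $Y$ near $m$ on opposite sides must also be more than $1$ apart, and $\Omega\cap m$ is a segment of length $2\sqrt{1-d^2/4}\le 2$. So near $m$ at most a bounded number of points can be split between the two sides, roughly two per side of $\ell$ in total rather than two per half. Combining this with the outward arcs, where each half holds at most $3$ points beyond the bisector region (an arc of $\le 180^\circ$ outside the strip), gives $3+3+2=8$.

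\textbf{Main obstacle.} The delicate step is Step 4, which makes the interaction across $m$ rigorous uniformly in $d\in[0,1]$. I would parametrise each point of $Y$ by its angle around its nearer centre and prove the following: if points $y\in\Omega_1$ and $y'\in\Omega_2$ lie on the same side of $\ell$ within angle $<60^\circ$ of the direction of $m$, then $\|y-y'\|\le 1$. This lets each side of $\ell$ contribute at most one ``bisector'' point.

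Since Lemma~\ref{lem:udg-independence} is quoted from Grelier et al., in the paper I would simply cite their proof and only check that the closed/open distance conventions match ours, namely $\le 1$ for adjacency and $>1$ for independence.
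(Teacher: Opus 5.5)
\textbf{Verdict: genuine gap.} The paper does not prove this lemma; it imports it from Grelier et al. Your closing remark (cite their proof and check the $\le 1$ versus $>1$ conventions) is exactly what the paper does. The self-contained argument you sketch, however, does not prove the bound. Steps~1 and~2 are correct but only yield $|Y|\le 5+5=10$. Everything below $10$ rests on Step~4, and its key claim is false as stated. Take $d=1$, $x_1=(-\tfrac12,0)$, $x_2=(\tfrac12,0)$ and, for small $\varepsilon>0$,
$y=x_1+(\cos(90^\circ+\varepsilon),\sin(90^\circ+\varepsilon))$ and $y'=x_2+(\cos(90^\circ-\varepsilon),\sin(90^\circ-\varepsilon))$.
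Then $y\in\Omega_1$ and $y'\in\Omega_2$ lie on the same side of $\ell$, each within angle $\varepsilon$ of the direction of $m$, yet $\|y-y'\|=1+2\sin\varepsilon>1$. Suppose you meant angles measured from the direction $x_i\to x_{3-i}$ instead. Then the claim is true, because the two sectors on one side of $\ell$ fill an equilateral triangle of side $d\le 1$. But the rest of $\Omega_i$ then spans $240^\circ$ around $x_i$, not $180^\circ$, and can hold four points of $Y$. The count becomes $4+4+2=10$.

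The missing piece is the real content of the lemma. Put $Y_1=Y\cap\Omega_1$ and $Y_2=Y\setminus Y_1$. Step~1 gives $|Y_1|,|Y_2|\le 5$, so the lemma is equivalent to showing that $|Y_1|=5$ forces $|Y_2|\le 3$, and symmetrically with $x_1,x_2$ exchanged. Your proposal never proves this, and the bound has no slack. For $d=1$, consider the eight points $(0,0)$, then $x_1+(\cos\theta,\sin\theta)$ for $\theta\in\{91^\circ,152^\circ,213^\circ,274^\circ\}$, and $x_2+(\cos\theta,\sin\theta)$ for $\theta\in\{-37^\circ,24^\circ,85^\circ\}$. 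They are pairwise more than $1$ apart. In this configuration:
\begin{itemize}
\item $|Y_1|=5$;
\item four points of $Y_1$ lie outside the sector of the second reading;
\item one point is the midpoint of $x_1x_2$, at distance $\tfrac12$ from both centres, so you cannot restrict attention to points near $\partial B(x_i,1)$ as Step~3 does.
\end{itemize}
A correct argument must rule out $|Y_1|=5$ together with $|Y_2|\ge 4$, exactly and uniformly in $d$. As written, Steps~3 and~4 are a heuristic rather than a proof.
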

Let $e=uv$ be an edge of a unit disk graph $G$ with maximum degree $\Delta$.
Set $X_1=N(u)\setminus N[v]$, $X_2=N(v)\setminus N[u]$, $X_3=N(u)\cap N(v)$, and $X=X_1\cup X_2\cup X_3$. Recall that, for a graph $G$ we denote its independence number by $\alpha(G)$. We now establish the structural properties of $G[X]$.

\begin{lemma}\label{lem:udg-local}
Each of $G[X_1]$ and $G[X_2]$ can be partitioned into four cliques. Let the vertex sets of the cliques be $C_1,\ldots,C_8$  such that
$|C_1|\leq\cdots\leq |C_8|$.  Then
$\alpha(G[X])\leq8$ and
$|E(G[X])|\geq \frac{|X|^2}{16}- \frac{|X|}{2}$.
Furthermore, if $E_c$ denotes the set of edges joining distinct members of $\{C_1,\ldots,C_8,X_3\}$, then $|E_c|\geq |C_1|\cdot\min\{|C_2|,|X_3|\}$.
\end{lemma}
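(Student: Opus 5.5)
We work in the normalized unit disk representation, so that $xy\in E(G)$ exactly when $\|x-y\|\le 1$, and we put $d=\|u-v\|\le1$.

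\emph{Step 1: partition $X_1$ and $X_2$ into four cliques each.} For $X_1$, every point lies in the closed unit disk around $u$. We cut this disk by two perpendicular lines through $u$, with one line along the direction $uv$, into four closed quarter-disks. A quarter-disk of radius $1$ has diameter $\sqrt2>1$, so we cannot use it directly; the extra constraint must come from $\|x-v\|>1$. The points of $X_1$ lie in the lune $D_u\setminus D_v$, whose width along the direction $uv$ is limited. We therefore choose the four sectors so that each sector intersected with $D_u\setminus D_v$ has diameter at most $1$. A natural choice is the sectors at $u$ bounded by the rays at angles $\pm\pi/2$ and $\pm 5\pi/6$ (or similar), measured from the direction away from $v$. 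A sector of angle at most $\pi/3$ in a unit disk has diameter $1$. The two sectors facing $v$ are truncated by $D_v$, and we must check that their diameter is still at most $1$; we would verify this by computing the distance between extreme points. This geometric verification is the main obstacle. If a fixed choice of angles fails for some $d$, we would let the cut angles depend on $d$. The set $X_2$ is handled symmetrically. We also need to resolve boundary ties by assigning each boundary point to one of the adjacent sectors.

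\emph{Step 2: bound $\alpha(G[X])\le 8$.} An independent set in $G[X]$ is a set of points pairwise at distance greater than $1$, each adjacent to $u$ or $v$. So Lemma~\ref{lem:udg-independence}, applied with $x_1=u$ and $x_2=v$, gives $\alpha(G[X])\le 8$ directly.

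\emph{Step 3: bound $|E(G[X])|$ from below.} The set $X_3$ lies in $D_u\cap D_v$, and this lens can be covered by two cliques, by splitting along the line $uv$; each half has diameter at most $1$. Hence $X$ is covered by at most $10$ cliques. That is not enough for the constant $1/16$, so instead we use $\alpha\le 8$. The complement of $G[X]$ contains no $K_9$, so by Turán's theorem it has at most $(1-\tfrac18)\tfrac{|X|^2}{2}$ edges. Therefore
\[|E(G[X])|\ge \binom{|X|}{2}-\frac{7|X|^2}{16}=\frac{|X|^2}{16}-\frac{|X|}{2}.\]
Equivalently, this follows by convexity from a partition into $8$ parts where each part induces a clique, but the Turán route is cleaner.

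\emph{Step 4: bound $|E_c|$.} Every $y\in X_3$ is adjacent to both $u$ and $v$. We want edges between $C_1$ and either $C_2$ or $X_3$. The idea is that a vertex $c\in C_1$ with few neighbours in $X_3$ has many neighbours in $C_2$, or vice versa. We plan to use a local independence argument. Suppose some $c\in C_1$ and some $c'\in C_2$ are nonadjacent and both are nonadjacent to some $y\in X_3$. Then we can extend $\{c,c',y\}$ to an independent set of size $9$ by picking one point from each of the other cliques $C_3,\dots,C_8$. Whether such points exist depends on geometry, not just on $\alpha\le8$, so this needs care. The simpler plan is to show directly the structural fact that every vertex of $C_1$ is adjacent either to all of some clique among the others or to all of $X_3$. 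This gives at least $\min\{|C_2|,|X_3|\}$ neighbours for each vertex of $C_1$, since every other clique $C_j$ has $|C_j|\ge|C_2|$. Summing over $C_1$ yields the bound. We expect this to require a Lemma~\ref{lem:udg-independence}-style extremal configuration argument, namely that $9$ pairwise far points cannot all be adjacent to $u$ or $v$. We would apply it to $c$ together with one representative per remaining part: if $c$ missed a vertex in every part, those misses would form a large far-apart set, contradicting $\alpha\le 8$.
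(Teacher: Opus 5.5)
Your Steps 2 and 3 are exactly the paper's arguments: Lemma~\ref{lem:udg-independence} with $x_1=u$, $x_2=v$, and then Tur\'an's theorem applied to the complement of $G[X]$. Steps 1 and 4, however, each contain a genuine gap.

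\textbf{Step 1.} The cut you propose does not yield cliques. Rays at $\pm\pi/2$ and $\pm5\pi/6$ from the direction away from $v$ leave a sector of angle $\pi$, namely the back half of $D_u$. Taking $u=(0,0)$ and $v=(d,0)$, this sector contains $(-\tfrac{1}{\sqrt2},\pm\tfrac{1}{\sqrt2})$. Both points lie in $D_u\setminus D_v$, and they are $\sqrt2$ apart.

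You are also looking for the difficulty in the wrong place. The missing observation is that the entire cone at $u$ of half-angle $\pi/3$ around the direction of $v$ lies inside $D_v$. Take $x=u+r(\cos\theta,\sin\theta)$ with $0\le r\le1$ and $|\theta|\le\pi/3$, measured from $uv$. Then $\|x-v\|^2=r^2+d^2-2rd\cos\theta\le r^2+d^2-rd\le\max\{r,d\}^2\le1$. So $X_1$ lies in an angular range of width $4\pi/3$ at $u$. This range splits into four sectors of angle $\pi/3$, each of diameter $1$. The argument is uniform in $d$ and needs no truncation estimate. Incidentally, your aside that $D_u\cap D_v$ splits along $uv$ into two cliques is false for small $d$, but you do not use it.

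\textbf{Step 4.} The per-vertex claim that every $c\in C_1$ is complete to some other part does not follow from $\alpha(G[X])\le8$. The non-neighbours of $c$ that you pick in the other parts are only known to be non-adjacent to $c$, not to one another. So the edge guaranteed among these nine vertices need not touch $c$.

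The claim is in fact false. Let $u=(0,0)$, $v=(1,0)$, and let $c=(0,1)$ form a singleton sector, so that it may serve as $C_1$. Each of the other seven sectors and $X_3$ can contain a point at distance more than $1$ from $c$, e.g.\ $(-1,0)$, $(0,-1)$, $(2,0)$, $(\tfrac12,-0.8)$.

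What you need is the paper's global double count. Suppose all nine parts are nonempty, and consider the $N=|X_3|\prod_{i=1}^8|C_i|$ transversals, taking one vertex from each part. Each transversal has nine vertices, so it spans an edge, and that edge necessarily lies in $E_c$. An edge between distinct parts $S,S'$ lies in exactly $N/(|S||S'|)$ transversals. For every such pair, $|S||S'|\ge|C_1|\min\{|C_2|,|X_3|\}$. Hence $N\le|E_c|\,N/(|C_1|\min\{|C_2|,|X_3|\})$, which is the bound. The edges of $E_c$ found this way need not be incident with $C_1$ at all, which is why a statement about each individual vertex of $C_1$ is the wrong target.
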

\begin{proof}
Consider a representation of $G$ by intersecting unit disks with centre $u=(0, 0)$ and $v=(\rho, 0)$, where $0< \rho \leq 1$. A point $x=(r, \theta)$ with $0\leq r \leq 1$ and $|\theta| \leq \frac{\pi}{3}$ satisfies $\| x - v\|^2 \leq r^2 + \rho^2 - r \rho \leq 1$. Hence every point representing a vertex of $G[X_1]$ lies in the complementary angular interval of width at most $\frac{4 \pi}{3}$. Splitting this interval into four sectors of width at most $\frac{\pi}{3}$ gives four cliques. Similar argument also follow for $G[X_2]$.

Every point representing a vertex of $X$ is within distance $1$ of $u$ or
$v$. Lemma~\ref{lem:udg-independence} therefore gives $\alpha(G[X])\leq8$. Applying Tur\'{a}n's theorem to complement of $G[X]$
gives $ |E(G[X])|\geq\binom{|X|}{2}-\frac{7|X|^2}{16}=\frac{|X|^2}{16}-\frac{|X|}{2}.$

If one of $C_1,\ldots,C_8,X_3$ is empty, then $|E_c| \geq 0$. Assume that all nine sets are nonempty. There are $|X_3|\cdot\prod_{i=1}^8|C_i|$ choices of nine vertices, one from each set.
Since $\alpha(G[X])\leq8$, every such choice contains an edge of $E_c$. Counting each edge of $E_c$ once for every choice containing both of its endpoints, the total number of occurrences of edges of $E_c$ among all such choices is at least $|X_3|\cdot\prod_{i=1}^8|C_i|$.
An edge joining two of the nine sets is contained in exactly the total number of choices $|X_3|\cdot \prod_{i=1}^8|C_i|$ divided by the product of the sizes of those two sets. That product is at least $|C_1|\min\{|C_2|,|X_3|\}$. Consequently, each edge of $E_c$ occurs in at most
$\frac{|X_3|\cdot \prod_{i=1}^8|C_i|}{(|C_1|\min\{|C_2|,|X_3|\})}$ choices. Hence, $|X_3|\cdot\prod_{i=1}^8|C_i|\leq\frac{|E_c|\cdot|X_3|\cdot\prod_{i=1}^8|C_i|}
{|C_1|\min\{|C_2|,|X_3|\}} $ and we get that $|E_c|\geq |C_1|\min\{|C_2|,|X_3|\}$.
\hfill $\square$
\end{proof}

For an edge $e=uv$, let
$\kappa(e)=|E(G[N(u)\cap N(v)])|$, which is the number of copies of $K_4$ containing $e$. Order the edges of $G$ by nondecreasing $\kappa$. For the edge $e$, let $b$ be an integer satisfying
$\binom{b-1}{2}\leq\kappa(e)<\binom{b}{2}$. Let $\eta(e)$ denote the number of edges preceding $e$ in this ordering that conflict with $e$.
\begin{lemma}\label{lem:udg-order}
We have
$$
\eta(e)\leq\Delta|X|-\max\Biggl\{
\frac{|X|^2}{16}-\frac{|X|}{2},
\sum_{i=1}^8\binom{|C_i|}{2}
+\sum_{\{i:|C_i|>b\}}\binom{|C_i|}{2}
+\kappa(e)+|C_1|\min\{|C_2|,|X_3|\}
\Biggr\}.
$$
\end{lemma}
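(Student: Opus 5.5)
The plan is to count, for the edge $e=uv$, all edges that conflict with $e$ by summing degrees over $X$. We then correct for edges counted twice, and finally discard edges that are forced to come \emph{after} $e$ in the ordering.

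\textbf{Step 1 (the basic count).} An edge $g\neq e$ conflicts with $e$ exactly when $g$ has an endpoint in $N[u]\cup N[v]=X\cup\{u,v\}$. Every edge at $u$ or $v$ other than $e$ has its other endpoint in $N(u)\cup N(v)\setminus\{u,v\}=X$. Hence every conflicting edge has at least one endpoint in $X$. The sum $\sum_{x\in X}\deg(x)\le\Delta|X|$ therefore counts every conflicting edge at least once. It counts exactly the edges of $G[X]$ twice. So the number of edges conflicting with $e$ is at most $\Delta|X|-|E(G[X])|$.

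\textbf{Step 2 (first branch of the maximum).} Since $\eta(e)$ counts only a subset of the conflicting edges, Lemma~\ref{lem:udg-local} immediately gives $\eta(e)\le\Delta|X|-\bigl(\frac{|X|^2}{16}-\frac{|X|}{2}\bigr)$.

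\textbf{Step 3 (second branch of the maximum).} Here we use a finer lower bound on the amount to subtract. Consider three families of edges of $G[X]$:
\begin{itemize}
\item the edges inside the cliques $C_1,\dots,C_8$;
\item the $\kappa(e)$ edges inside $X_3$;
\item the set $E_c$.
\end{itemize}
These families are pairwise disjoint. The cliques and $X_3$ partition $X$, and $E_c$ consists of edges between distinct parts. Their total size is at least $\sum_i\binom{|C_i|}{2}+\kappa(e)+|C_1|\min\{|C_2|,|X_3|\}$, using Lemma~\ref{lem:udg-local} for $E_c$. Subtracting this instead of $|E(G[X])|$ is valid because it is a lower bound for $|E(G[X])|$.

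\textbf{Step 4 (the extra term).} We show that every edge lying inside a clique $C_i$ with $|C_i|>b$ comes strictly after $e$. Each such edge is counted twice in $\sum_x\deg(x)$ yet contributes $0$ to $\eta(e)$, so it may be subtracted a second time.

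Take $x,y\in C_i$ with $C_i\subseteq X_1$; the case $C_i\subseteq X_2$ is symmetric, with $v$ in place of $u$. Every vertex of $C_i\setminus\{x,y\}$ is a common neighbour of $x$ and $y$, and so is $u$, since $C_i\subseteq N(u)$. Thus $N(x)\cap N(y)$ contains the clique $(C_i\setminus\{x,y\})\cup\{u\}$, which has size $|C_i|-1\ge b$. Hence
\[
\kappa(xy)\ge\binom{b}{2}>\kappa(e).
\]
Since $\kappa(xy)>\kappa(e)$ strictly, $xy$ follows $e$ in the nondecreasing order however ties are broken. So $xy$ is not counted in $\eta(e)$.

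\textbf{Conclusion and main obstacle.} Combining Steps 3 and 4 gives the second expression inside the maximum. Taking the better of the two bounds yields the lemma. The only delicate point is Step 4: checking that $u$ (respectively $v$) together with the rest of the clique gives a common clique of size at least $b$, and that the strict inequality removes any dependence on tie-breaking. Everything else is bookkeeping of the double count.
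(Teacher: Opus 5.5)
Your proposal is correct and follows the paper's own argument. You bound the conflicting edges by $\Delta|X|-|E(G[X])|$, then additionally discard the internal edges of the cliques $C_i$ with $|C_i|>b$, which follow $e$ because $u$ or $v$ together with $C_i\setminus\{x,y\}$ gives $\kappa(xy)\ge\binom{b}{2}>\kappa(e)$. Finally you lower-bound $|E(G[X])|$ either by the Tur\'{a}n estimate or by the clique/$X_3$/$E_c$ decomposition, exactly as the paper does, and your explicit checks (that $X=(N(u)\cup N(v))\setminus\{u,v\}$, and that strictness makes tie-breaking irrelevant) only fill in details the paper leaves implicit.
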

\begin{proof}
Suppose that $xy$ is an internal edge of $C_i$. Either the vertex $u$ or the vertex $v$ together with the vertices of $C_i\setminus\{x,y\}$, forms a clique of size $|C_i|-1$ in $N(x)\cap N(y)$. Hence
$\kappa(xy)\geq\binom{|C_i|-1}{2}$. If $|C_i|>b$, then
$\kappa(xy)\geq\binom{b}{2}>\kappa(e)$, so every edge internal to $C_i$ occurs after $e$.
Every edge other than $e$ that conflicts with $e$ has an endpoint in $X$. The number of edges incident with $X$ is at most $\Delta|X|-|E(G[X])|$ and Lemma \ref{lem:udg-local} gives $ |E(G[X])|\geq \frac{|X|^2}{16}-\frac{|X|}{2}$. Again by the above argument we get that,
$\eta(e)\leq\Delta|X|-|E(G[X])|
-\sum_{\{i:|C_i|>b\}}\binom{|C_i|}{2}$.
 Alternatively,
$|E(G[X])|\geq\sum_{i=1}^8\binom{|C_i|}{2}
+|E(G[X_3])|+|E_c|$,
where $|E(G[X_3])|=\kappa(e)$. Now from above two lower bounds of $|E(G[X])|$ we get the required bound of $\eta(e)$.
\hfill $\square$
\end{proof}
The following lemma gives a lower bound for
the second term inside the maximum in
Lemma~\ref{lem:udg-order}, in terms of $\sum_{i=1}^{8}|C_i|$ and $b$.

\begin{lemma}\label{lem:udg quardatic-bound}
Let $0\leq |C_1|\leq\cdots\leq |C_8|$ and let $b\geq0$. Then
$$
\begin{aligned}
&\frac12\sum_{i=1}^8|C_i|^2
+\frac12\sum_{\{i:|C_i|>b\}}|C_i|^2
+|C_1|\min\{|C_2|,b\}\\
&\quad\geq
\begin{cases}
\displaystyle\frac{(\sum_{i=1}^8|C_i|)^2}{14},
&\displaystyle0\leq\sum_{i=1}^8|C_i| \leq 7b,\\[5pt]
\displaystyle3b^2+\frac12(\sum_{i=1}^8|C_i|-6b)^2,
&\displaystyle7b<\sum_{i=1}^8|C_i|\leq8b,\\[5pt]
\displaystyle\frac92b^2+(\sum_{i=1}^8|C_i|-7b)^2, &\displaystyle8b<\sum_{i=1}^8|C_i|< 9b,\\[5pt]
\displaystyle\frac{17}{162}(\sum_{i=1}^8|C_i|)^2,
&\displaystyle\sum_{i=1}^8|C_i|\geq 9b.
\end{cases}
\end{aligned}$$
\end{lemma}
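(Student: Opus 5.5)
Write $c_i=|C_i|$, $S=\sum_{i=1}^8 c_i$, and let $F(c_1,\dots,c_8)$ denote the left-hand side. The plan is to treat the claim as a minimisation of $F$ over nondecreasing nonnegative reals with fixed sum $S$; integrality is not needed for a lower bound. The only non-smooth features are the threshold indicator $[c_i>b]$ and the term $c_1\min\{c_2,b\}$. Parts with $c_i\le b$ cost $\tfrac12c_i^2$ and parts with $c_i>b$ cost $c_i^2$. I would therefore fix two pieces of combinatorial data: the set $P=\{i: c_i>b\}$, which by monotonicity is a terminal segment $\{9-m,\dots,8\}$, and whether $c_1=0$. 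For each such choice the objective restricted to the corresponding polyhedral region is a convex quadratic, or is bounded below by one. The boundary $c_i=b$ belongs to the cheaper side.

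First I would simplify the cross term. If $c_1\le b$, then $c_1\min\{c_2,b\}\ge c_1^2$, since $c_1\le c_2$. So in every case the first part carries effective weight at least $\tfrac32$ or is zero. If instead $c_1>b$, every part exceeds $b$, and $F\ge\sum c_i^2\ge S^2/8$, which dominates all four claimed expressions; I would check this against the last three bounds using $S\ge 8b$ in that situation. This leaves two regimes, $c_1=0$ and $0<c_1\le b$. In the second regime weighted Cauchy--Schwarz gives $F\ge S^2\big/\bigl(2\sum_i 1/w_i\bigr)$ with $w_1=3$ and $w_i\in\{1,2\}$ for the rest. With $m=0$ this yields $3S^2/44\ge S^2/14$. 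When $c_1=0$, the remaining seven parts with weights $\tfrac12$ give exactly $S^2/14$. This settles the first range, and in fact shows $F\ge S^2/14$ for all $S$.

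For the larger ranges I would use the constraint $c_i\le b$ for $i\notin P$. Once $S$ exceeds $7b$, seven parts cannot all stay at or below $b$. The minimiser then either fills the cheap parts up to $b$ and pushes the excess into parts of weight $1$, or activates $c_1>0$ at cost $\tfrac32c_1^2$. For each $m$ I would solve the constrained convex problem explicitly: cheap parts at $\min\{b,\lambda\}$, and expensive parts equalised by a Lagrange multiplier. This produces piecewise quadratic lower bounds in $S$. Taking the minimum over $m$ and over the two regimes should give expressions of the form $\tfrac{k}{2}b^2+\alpha(S-kb)^2$, which one compares with $3b^2+\tfrac12(S-6b)^2$ on $(7b,8b]$ and with $\tfrac92b^2+(S-7b)^2$ on $(8b,9b)$. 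For $S\ge9b$ I expect the global minimum to be attained with all parts positive and some above $b$. I would compute this interior optimum and verify that it is at least $\tfrac{17}{162}S^2$, which is $S^2/(2\cdot\tfrac{81}{17})$ and matches weights of the shape $3,1,\dots$ summed via Cauchy--Schwarz.

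I expect the main obstacle to be the bookkeeping in the middle ranges. There the claimed right-hand sides are not obviously the true minima, so the proof reduces to checking, for each of roughly $16$ cases (choice of $m$ and of $c_1=0$ or not), that the explicit constrained minimum exceeds the stated expression. The first thing I would try is to show that the minimum over cases is monotone in $m$ in the relevant range, so that only $m\in\{0,1,2\}$ needs to be compared. Any remaining inequalities between quadratics in $S/b$ would then be verified on the given intervals.
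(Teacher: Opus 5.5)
\paragraph{Verdict.} There is a genuine gap. It sits at your first simplification, replacing $c_1\min\{c_2,b\}$ by $c_1^2$. That step throws away exactly the coupling the lemma depends on.

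\paragraph{Where the argument fails.} The comparison you draw from it is false: $\tfrac{3}{44}<\tfrac{1}{14}$, since $3\cdot14=42<44$. The weakness is in the relaxed problem itself, not only in the Cauchy--Schwarz step, because the relaxation genuinely falls below the targets in ranges~1--3.
\begin{itemize}
\item \emph{Range 1.} Take $c_1=S/22$ and $c_2=\dots=c_8=3S/22$ with $3S/22\le b$. Your relaxed objective $\tfrac32c_1^2+\tfrac12\sum_{i\ge2}c_i^2$ equals $\tfrac{3}{44}S^2<\tfrac{1}{14}S^2$. The true value there is $\tfrac{35}{484}S^2$, so the statement survives but your bound does not.
\item \emph{Range 2, $7b<S<8b$.} The point $c_1=S-7b$, $c_2=\dots=c_8=b$ attains the claimed $3b^2+\tfrac12(S-6b)^2$ with equality. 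Your relaxed cost at this point is smaller by $(S-7b)(8b-S)>0$.
\item \emph{Range 3, $8b<S<\tfrac{17}{2}b$.} Take $c_1=\tfrac25(S-6b)$, $c_2=\dots=c_7=b$, $c_8=\tfrac35(S-6b)$. The relaxed cost is $3b^2+\tfrac35(S-6b)^2$, which lies below $\tfrac92b^2+(S-7b)^2$ by $\tfrac25\bigl(S-\tfrac{17}{2}b\bigr)^2$.
\end{itemize}
So the case-by-case Lagrange computation you plan would, even if carried out correctly, only give bounds strictly weaker than the statement in ranges~1--3.

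\paragraph{The missing idea.} Keep the product $c_1c_2$ whenever $c_2\le b$. Then
$\tfrac12c_1^2+\tfrac12c_2^2+c_1c_2=\tfrac12(c_1+c_2)^2$,
so $p=c_1+c_2$ behaves as a single cheap block of weight $\tfrac12$ and capacity $2b$. This is exactly the step the paper uses, and it gives each range directly.
\begin{itemize}
\item Cauchy--Schwarz over the seven blocks $p,c_3,\dots,c_8$ gives $S^2/14$.
\item The caps $c_3,\dots,c_8\le b$ force $p\ge S-6b$, which yields $3b^2+\tfrac12(S-6b)^2$ on $(7b,8b]$.
\item For $S>8b$ with $t$ parts above $b$, saturate $p=2b$ and the other cheap parts at $b$. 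This gives $\tfrac{10-t}{2}b^2+\tfrac{(S-(8-t)b)^2}{t}$, which on $(8b,9b)$ is minimised at $t=1$.
\end{itemize}
Your crude bound $c_1b\ge c_1^2$ is harmless only when $c_2>b$. In that case $c_2,\dots,c_8$ are all expensive, and it gives $F\ge\tfrac{3}{23}S^2$, which is enough there.

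With this change your framework does work: fix the set $P$ and solve each convex piece. It would also supply the justification the paper omits for its assertion that minimisers keep every part at most $b$ when $S\le8b$.

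\paragraph{On range 4.} The constant $\tfrac{17}{162}$ is not a Cauchy--Schwarz constant. It equals $\tfrac{17}{2}b^2/(9b)^2$, the value of the tight point $c_1=\dots=c_7=b$, $c_8=2b$ at $S=9b$, where the cheap parts sit on the cap. So range~4 follows by checking that each $t$-expression stays above $\tfrac{17}{162}S^2$ for $S\ge9b$, not from an interior optimum.
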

\begin{proof}
Let us consider $\Phi= \frac12\sum_{i=1}^8|C_i|^2
+\frac12\sum_{\{i:|C_i|>b\}}|C_i|^2
+|C_1|\min\{|C_2|,b\} $. If $b=0$, $\Phi$ is at least
$\sum_{i=1}^8|C_i|^2$, and the Cauchy--Schwarz inequality gives $\sum_{i=1}^8|C_i|^2\geq(\sum_{i=1}^8|C_i|)^2/8$. Now let us assume $b>0$.

Suppose $\sum_{i=1}^8|C_i|\leq8b$, and fix the value of this sum. The values $|C_1|,\ldots,|C_8|$ are at most $b$ at a minimizer of $\Phi$. 
Then $\frac12\sum_{\{i:|C_i|>b\}}|C_i|^2 = 0$ and the last term of $\Phi$ is $|C_1||C_2|$. The first two
values $C_1$ and $C_2$ contribute $\frac{(|C_1|+|C_2|)^2}{2}$. By Cauchy--Schwarz inequality, the remaining six
values contribute at least
$\frac{(\sum_{i=1}^8|C_i|-|C_1|-|C_2|)^2}{12}$. Moreover,
$|C_1|+|C_2|\geq\sum_{i=1}^8|C_i|-6b$. Hence $\Phi$ is at least $\frac{(|C_1|+|C_2|)^2}{2}+ \frac{(\sum_{i=1}^8|C_i|-|C_1|-|C_2|)^2}{12}$ and minimized at
$|C_1|+|C_2|=(\sum_{i=1}^8|C_i|)/7$ when $\sum_{i=1}^8|C_i|\leq7b$. Hence  $\Phi \geq \frac{(\sum_{i=1}^8|C_i|)^2}{14}$. When
$7b\leq\sum_{i=1}^8|C_i|\leq8b$ minimum of  $\frac{(|C_1|+|C_2|)^2}{2}+ \frac{(\sum_{i=1}^8|C_i|-|C_1|-|C_2|)^2}{12}$  occurs at
$|C_1|+|C_2|=\sum_{i=1}^8|C_i|-6b$ which gives $\Phi \geq 3b^2+\frac12(\sum_{i=1}^8|C_i|-6b)^2$. 

Now suppose that $\sum_{i=1}^8|C_i|>8b$ and let $t\in\{1,\ldots,7\}$ of the values $|C_1|,\ldots,|C_8|$ exceed $b$. Then the values of $(8-t)$ remaining $|C_i|$'s equal to $b$ at a minimizer of $\Phi$. Since $t$ values have total $|C_i| - (8-t)b$ and the sum of squares is smallest when the values are equal we get that $\Phi$ is at least
$\frac{10-t}{2}b^2+\frac{(\sum_{i=1}^8|C_i|-(8-t)b)^2}{t}$. If $8b<\sum_{i=1}^8|C_i|<9b$, the difference between the expression $\frac{10-t}{2}b^2+\frac{(\sum_{i=1}^8|C_i|-(8-t)b)^2}{t}$ and
its value for $t=1$ is
$\frac{(t-1)(tb^2-2(\sum_{i=1}^8|C_i|-8b)^2)}{2t}\geq0$.
Then $\Phi \geq \frac92b^2+(\sum_{i=1}^8|C_i|-7b)^2 $. Now if all eight values exceed $b$, $\Phi$ is at least
$\frac{(\sum_{i=1}^8|C_i|)^2}{8}$. Since   $8b<\sum_{i=1}^8|C_i|<9b$ we get that, $\frac{(\sum_{i=1}^8|C_i|)^2}{8} > \frac92b^2+(\sum_{i=1}^8|C_i|-7b)^2 $ .

Finally, suppose that $\sum_{i=1}^8|C_i|\geq9b$ and for  $t\in\{1,\ldots,7\}$  subtract $\frac{17}{162}(\sum_{i=1}^8 |C_i|)^2$ from $\frac{10-t}{2}b^2+\frac{(\sum_{i=1}^8|C_i|-(8-t)b)^2}{t}$. Since $\frac{10-t}{2}b^2+\frac{(\sum_{i=1}^8|C_i|-(8-t)b)^2}{t}- \frac{17}{162}(\sum_{i=1}^8 |C_i|)^2 $ is nonnegative at $\sum_{i=1}^8|C_i|\geq9b$ increasing at $\sum_{i=1}^8|C_i|\geq9b$, we get that $\frac{10-t}{2}b^2+\frac{(\sum_{i=1}^8|C_i|-(8-t)b)^2}{t}\geq \frac{17}{162}(\sum_{i=1}^8 |C_i|)^2 $. Hence $\Phi \geq \frac{17}{162}(\sum_{i=1}^8 |C_i|)^2 $. If all the values of $|C_i|$ exceed $b$ then by Cauchy--Schwarz inequality we get that  $ \sum_{i=1}^8 |C_i|^2 \geq \frac{(\sum_{i=1}^8|C_i|)^2}{8} > \frac{17}{162}(\sum_{i=1}^8 |C_i|)^2$.
\hfill $\square$
\end{proof}

We now combine Lemmas~\ref{lem:udg-order} and
\ref{lem:udg quardatic-bound} with the relations below to obtain the inequality used in the final optimisation.
Recall that $|X|=\sum_{i=1}^8|C_i|+|X_3|$. Since $|X_1|+|X_3| \leq \Delta-1$ and $|X_2|+|X_3| \leq \Delta-1$ we get that, $\sum_{i=1}^8|C_i|\leq2(\Delta-|X_3|-1)$. 
If $X_3=\emptyset$, then $\kappa(e)=0$ and hence $b=2$. From Lemma~\ref{lem:udg-order}, $\sum_{i=1}^8\binom{|C_i|}{2}
+\sum_{\{i:|C_i|>2\}}\binom{|C_i|}{2}
\geq \frac{(\sum_{i=1}^8|C_i|)^2}{8}-O(\Delta)$, where we use $\sum_{i=1}^8|C_i|^2\geq \frac{(\sum_{i=1}^8|C_i|)^2}{8}$ and $\sum_{i=1}^8|C_i|\leq2\Delta$. Consequently, $\eta(e)\leq\Delta \sum_{i=1}^8|C_i| -\frac{(\sum_{i=1}^8|C_i|)^2}{8}+O(\Delta)
\leq\frac32\Delta^2+O(\Delta)
<\frac{225}{142}\Delta^2+O(\Delta)$.

Hence, we may assume that $X_3\neq\emptyset$.
Since $X_3\neq\emptyset$, we have $ \binom{b-1} {2}\leq\kappa(e)\leq\binom{|X_3|}{2}$,
and hence $b\leq|X_3|+1$.
Moreover, $\kappa(e)=\frac{b^2}{2}+O(\Delta)$. Expanding the binomial
coefficients in Lemma~\ref{lem:udg-order} and using
$|C_1|\min\{|C_2|,|X_3|\}\geq |C_1|\min\{|C_2|,b\}-O(\Delta)$ gives
\begin{equation}\label{eq:udg-main}
\eta(e)\leq\Delta|X|-\max\Biggl\{
\frac{|X|^2}{16},
\frac12\sum_{i=1}^8|C_i|^2
+\frac12\sum_{\{i:|C_i|>b\}}|C_i|^2
+|C_1|\min\{|C_2|,b\}+\frac{b^2}{2}
\Biggr\}+O(\Delta).
\end{equation}
Optimising the right-hand side of inequality~\ref{eq:udg-main}  subject to the above constraints yields the following bound.

\begin{lemma}\label{lem:udg-optimisation}
For every edge $e$, $\eta(e) \leq \frac{225}{142} \Delta^2 + O(\Delta)$.
\end{lemma}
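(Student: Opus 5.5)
Throughout, write $s=\sum_{i=1}^8|C_i|$ and $y=|X_3|$, so that $|X|=s+y$. The plan is to treat \eqref{eq:udg-main} as a continuous optimisation problem after normalising by $\Delta$. Put $s=\sigma\Delta$, $y=\gamma\Delta$, $b=\beta\Delta$. The constraints established above become $\sigma\le 2(1-\gamma)$, $0\le\beta\le\gamma$ and $\gamma\le 1$; the slack $b\le y+1$ is absorbed into $O(\Delta)$. The case $X_3=\emptyset$ has already been handled, so we assume $y\ge1$.

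By Lemma~\ref{lem:udg quardatic-bound}, the second term inside the maximum is at least $\Psi(\sigma,\beta)\Delta^2$, where $\Psi=Q(\sigma,\beta)+\beta^2/2$ and $Q$ is the four-piece function of that lemma. It therefore suffices to show
\[
\max_{\sigma,\gamma,\beta}\;\Bigl[(\sigma+\gamma)-\max\Bigl\{\tfrac{(\sigma+\gamma)^2}{16},\,\Psi(\sigma,\beta)\Bigr\}\Bigr]\le\frac{225}{142}.
\]
Every inequality used in passing from integers to reals costs only $O(\Delta)$, so this yields Lemma~\ref{lem:udg-optimisation}.

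To handle the inner maximum, I would use the standard device of replacing $\max\{P,R\}$ by the convex combination $\lambda P+(1-\lambda)R$ for a suitably chosen $\lambda\in[0,1]$. For each $\lambda$ this gives a function that is a quadratic in $(\sigma,\gamma,\beta)$ on each piece of $Q$, and an upper bound on the original objective. I would first split into the four regimes $\sigma\le7\beta$, $7\beta<\sigma\le8\beta$, $8\beta<\sigma<9\beta$ and $\sigma\ge9\beta$. In each regime I would do two things:
\begin{enumerate}[label=(\roman*)]
\item First eliminate $\beta$. In the regimes $\sigma\le7\beta$ and $\sigma\ge9\beta$, $\Psi$ is increasing in $\beta$, so the adversary wants $\beta$ as small as the regime allows. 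In the two middle regimes, $\Psi$ is a convex quadratic in $\beta$, so its minimum is found by setting the derivative to zero and then clipping to the regime boundaries.
\item Next eliminate $\gamma$. The objective increases with $\sigma$ at fixed $\gamma$ until it is cut off by the cap, so I would expect the constraint $\sigma\le2(1-\gamma)$ to be tight at the optimum. The problem then becomes a two-variable one in $(\sigma,\gamma)$ with $\gamma=1-\sigma/2$, i.e.\ $|X|=\Delta+s/2$.
\end{enumerate}

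This reduction leaves a one-variable maximisation in $\sigma$ on each piece, of a minimum of two concave expressions:
\[
1+\tfrac{\sigma}{2}-\tfrac{(1+\sigma/2)^2}{16}
\qquad\text{and}\qquad
1+\tfrac{\sigma}{2}-\Psi^\ast(\sigma),
\]
where $\Psi^\ast$ is the minimised $\Psi$. The maximum of such a minimum occurs either at the unconstrained peak of one branch, if that peak lies below the other branch, or at the crossing point where the two branches are equal. I expect the extremal configuration to be a crossing point in one of the middle regimes. The value $225/142$ (note $142=2\cdot71$) looks like what such a crossing produces: a quadratic equation whose coefficients come from $17/162$ or the $\tfrac92\beta^2$ piece, combined with $1/16$. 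I would locate the crossing explicitly, check that it is feasible for the regime, and verify that it evaluates to $225/142$. For every other regime and boundary I would check that the value is at most $225/142$; for instance, the pure $|X|^2/16$ branch alone gives a value of at most about $1.6$ only at $|X|=2\Delta$, and that point is excluded because $\sigma\le2(1-\gamma)$ forces $|X|\le 2\Delta-y$.

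The main obstacle is this case bookkeeping: identifying which regime and which branch of the maximum is active at the true optimum, and certifying all the other cases rigorously. For the certificate I would try first to exhibit, regime by regime, an explicit multiplier $\lambda$ such that the combination $\lambda P+(1-\lambda)R$ is a concave quadratic whose global maximum over the feasible region is at most $225/142$. If this works, it avoids case analysis at the region boundaries and turns each regime into a single-line verification. Any edge effects, such as the region being empty or $\gamma$ very small, would be absorbed into $O(\Delta)$.
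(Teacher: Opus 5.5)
\textbf{Verdict: there is a gap.} Your framework, bounding the right-hand side of \eqref{eq:udg-main} range by range using the four-piece lower bound, is the paper's. But the reduction you actually describe would not reach $\frac{225}{142}$. In step (i) you minimise $\Psi$ over $\beta$ against the regime boundaries only, and only afterwards set $\gamma=1-\sigma/2$. This discards the coupling $\beta\le\gamma$ (that is, $b\le|X_3|+1$), which is the decisive constraint.

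Done your way, the numbers come out too large:
\begin{itemize}
\item In the range $7\beta<\sigma\le8\beta$ the minimiser is $\beta=6\sigma/43$, with $\Psi^\ast=\frac{7}{86}\sigma^2$. Both of your concave branches increase on $[0,2]$, so the one-variable problem peaks at $\sigma=2$ with value $\min\{\frac74,\frac{72}{43}\}\approx1.674$.
\item The range $\sigma\le7\beta$ likewise gives $\frac{82}{49}\approx1.673$.
\end{itemize}
Both exceed $\frac{225}{142}\approx1.5845$.

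The paper's proof works precisely by combining $b\le|X_3|+1$ with $\sum_i|C_i|\le2(\Delta-|X_3|-1)$:
\begin{itemize}
\item When $\sum_i|C_i|\le7b$, this gives $|X|\le8|X_3|+7$ and $|X|\le2\Delta-|X_3|-2$. Hence $|X|\le\frac{16}{9}\Delta+O(1)$, and the $|X|^2/16$ branch alone gives $\frac{128}{81}\Delta^2$.
\item When $7b<\sum_i|C_i|\le8b$, it gives $\sum_i|C_i|\le2(\Delta-b)$. Substituting this into the $\Psi$-branch yields $15\Delta b-\frac{71}{2}b^2\le\frac{225}{142}\Delta^2$; the sub-case $b\le\Delta/5$ gives $\frac{79}{50}\Delta^2$.
\end{itemize}
If you keep $\beta\le\gamma=1-\sigma/2$ active when eliminating $\beta$, your procedure collapses to exactly this case analysis.

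\textbf{Your picture of the extremum is also wrong.} This matters because your plan is to ``locate the crossing and verify it evaluates to $225/142$''. The optimum is not a crossing of the two branches, and it involves neither $\frac{17}{162}$ nor the $\frac92b^2$ piece. It lies at $b=|X_3|=\frac{15}{71}\Delta$ and $\sum_i|C_i|=\frac{112}{71}\Delta$, with both constraints tight. There the $\Psi$-branch strictly dominates: $\Psi\approx0.2042\Delta^2$ against $|X|^2/16\approx0.2000\Delta^2$. The $71$ comes from $\frac{71}{2}=\frac72+32$.

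In particular, no convex combination of the branches is needed. The paper uses the $|X|^2/16$ branch only when $\sum_i|C_i|\le7b$, and the $\Psi$-branch in the other three ranges, giving $\frac{225}{142}$, $\frac{25}{16}$ and $\frac{128}{81}$.

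Finally, your remark on the pure $|X|^2/16$ branch is numerically off. At $|X|=2\Delta$ it gives $\frac74\Delta^2$, and it already exceeds $\frac{225}{142}\Delta^2$ once $|X|\gtrsim1.783\Delta$. Configurations with $|X|$ near $2\Delta$ have small $|X_3|$, hence small $b$. They are handled by the $\frac{17}{162}$ bound, which gives exactly $\frac{128}{81}\Delta^2$ at $\sum_i|C_i|=2\Delta$.
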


\begin{proof}
We apply Lemma~\ref{lem:udg quardatic-bound} and consider its four ranges.

\smallskip
\noindent\emph{Case 1: $\sum_{i=1}^8|C_i|\leq7b$.}
Since $b\leq|X_3|+1$, we have $|X|\leq 8|X_3|+7$, and also we have $|X|\leq2\Delta-|X_3|-2$. Consequently, $|X|\leq16\Delta/9 + O(1)$. Since
$\Delta|X|-|X|^2/16$ is increasing on this interval,
$\Delta|X|-\frac{|X|^2}{16}+ O(\Delta)
\leq\frac{128}{81}\Delta^2 +  O(\Delta)
<\frac{225}{142}\Delta^2 +  O(\Delta)$.

\smallskip
\noindent\emph{Case 2: $7b<\sum_{i=1}^8|C_i|\leq8b$.}
Using the second term inside the maximum in inequality~ \ref{eq:udg-main} along with Lemma \ref{lem:udg quardatic-bound} we have, $ \Delta|X|-\frac72b^2-\frac12(\sum_{i=1}^8|C_i|-6b)^2 +  O(\Delta)
\leq \Delta^2+\frac{\Delta}{2}\sum_{i=1}^8|C_i|-\frac72b^2
-\frac12(\sum_{i=1}^8|C_i|-6b)^2 +  O(\Delta)$. As $7b\leq\sum_{i=1}^8|C_i|\leq2(\Delta-b)$ we have
$b\leq2\Delta/9$. Then $\Delta^2+\frac{\Delta}{2}\sum_{i=1}^8|C_i|-\frac72b^2
-\frac12(\sum_{i=1}^8|C_i|-6b)^2 +  O(\Delta)$ is increasing when  $7b<\sum_{i=1}^8|C_i|\leq8b$ taking $b$ as constant. Now since $\sum_{i=1}^8|C_i|\leq8b$ and $\sum_{i=1}^8|C_i|<2 \Delta - 2b$ we get two ranges of $b$. If $b\leq \frac{\Delta}{5}$, substituting $\sum_{i=1}^8|C_i|=8b$ gives
$ \eta(e) \leq \Delta^2+4\Delta b-\frac{11}{2}b^2 +  O(\Delta)
\leq\frac{79}{50}\Delta^2 +  O(\Delta)
<\frac{225}{142}\Delta^2 +  O(\Delta).
$
Here $\Delta^2+4\Delta b-11b^2/2$ is increasing for
$0\leq b\leq\Delta/5$.
Again if $b\geq \frac{\Delta}{5}$, then $2(\Delta-b)\leq8b$, and substituting
$\sum_{i=1}^8|C_i|=2(\Delta-b)$ gives
$ \eta(e) \leq
15\Delta b-\frac{71}{2}b^2 +  O(\Delta)
=\frac{225}{142}\Delta^2
-\frac{71}{2}\left(b-\frac{15\Delta}{71}\right)^2 +  O(\Delta)
\leq\frac{225}{142}\Delta^2 +  O(\Delta).
$

\smallskip
\noindent\emph{Case 3: $8b<\sum_{i=1}^8|C_i|<9b$.}
Since $8b<\sum_{i=1}^8|C_i|< 2(\Delta - b)$ we have that $b < \frac{\Delta}{5}$. Using Lemma \ref{lem:udg quardatic-bound} we get that
$ \eta(e) \leq 
\Delta|X|-5b^2-(\sum_{i=1}^8|C_i|-7b)^2 +  O(\Delta)
\leq\Delta^2+\frac72\Delta b-5b^2
+\frac{\Delta}{2}(\sum_{i=1}^8|C_i|-7b)
-(\sum_{i=1}^8|C_i|-7b)^2 +  O(\Delta)
\leq\Delta^2+\frac72\Delta b-5b^2+\frac{\Delta^2}{16} +  O(\Delta)
\leq\frac{25}{16}\Delta^2 +  O(\Delta)
<\frac{225}{142}\Delta^2 +  O(\Delta).
$
The third inequality follows from
$\frac{\Delta}{2}(\sum_{i=1}^8|C_i|-7b)
-(\sum_{i=1}^8|C_i|-7b)^2\leq\frac{\Delta^2}{16},
$ and the fourth inequality uses $b<\Delta/5$, which gives
$7\Delta b/2-5b^2\leq\Delta^2/2$.

\smallskip
\noindent\emph{Case 4: $\sum_{i=1}^8|C_i|\geq9b$.}
Using Lemma \ref{lem:udg quardatic-bound} and removing the nonnegative term $b^2/2$ gives
$ \eta (e) \leq \Delta|X|-\frac{17}{162}(\sum_{i=1}^8|C_i|)^2 +  O(\Delta)
\leq\Delta^2+\frac{\Delta}{2}\sum_{i=1}^8|C_i|
-\frac{17}{162}(\sum_{i=1}^8|C_i|)^2 +  O(\Delta).$
The expression on the right is increasing for
$0\leq\sum_{i=1}^8|C_i|\leq2\Delta$, so it is at most $128\Delta^2/81 +  O(\Delta)<225\Delta^2/142 +  O(\Delta)$.
\hfill $\square$
\end{proof}
Lemma~\ref{lem:udg-optimisation} bounds, for every edge, the number of preceding edges in
the $\kappa$-ordering that conflict with it. Greedy list colouring in this order now gives the following result.

\begin{theorem}\label{thm:udg-list}
Every unit disk graph $G$ of maximum degree $\Delta$ satisfies
$$ \chi'_{s,\ell}(G)\leq\frac{225}{142}\Delta^2+O(\Delta).$$
Moreover, a corresponding strong list edge colouring of the graph can be found in
polynomial time even if its disk representation is not given.
\end{theorem}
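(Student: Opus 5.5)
The plan is to colour greedily along the $\kappa$-ordering and use Lemma~\ref{lem:udg-optimisation} to bound the number of forbidden colours at each step.

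\emph{Ordering.} For each edge $e=uv$ compute $\kappa(e)=|E(G[N(u)\cap N(v)])|$. This quantity is determined by $G$ alone, so no disk representation is needed. Sort the edges by nondecreasing $\kappa$, breaking ties arbitrarily; the quantity $\eta(e)$ in Lemma~\ref{lem:udg-order} was defined with respect to any such ordering.

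\emph{Greedy list colouring.} Fix lists $L'(e)$ with $|L'(e)|\ge k$, where $k=\lfloor\frac{225}{142}\Delta^2\rfloor+C\Delta$ and $C$ is a constant large enough to dominate the $O(\Delta)$ term of Lemma~\ref{lem:udg-optimisation}. Process the edges in the chosen order. When edge $e$ is reached, the only colours it cannot take are those already given to earlier edges that conflict with $e$, and there are at most $\eta(e)$ of these. Lemma~\ref{lem:udg-optimisation} gives $\eta(e)\le\frac{225}{142}\Delta^2+O(\Delta)<k$, so $L'(e)$ still contains an available colour, which we assign to $e$. Every conflicting pair of edges is handled when the later of the two is coloured, so the final colouring is a strong $L'$-edge-colouring and $\chi'_{s,\ell}(G)\le k$.

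\emph{Running time.} Computing each $\kappa(e)$ takes time polynomial in $|V(G)|$. Building $L(G)^2$ takes $O(|V(G)|^2+|E(G)|^2)$ time, as in Corollary~\ref{cor:disk-algorithm}. Sorting the edges and running the greedy pass are polynomial in $|E(G)|$ plus the total size of the lists. The geometry enters only through the analysis, namely Lemma~\ref{lem:udg-local} and Lemma~\ref{lem:udg-independence}, and never through the algorithm.

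\emph{Main obstacle.} The one point needing care is that the $O(\Delta)$ term in Lemma~\ref{lem:udg-optimisation} must be uniform over all edges and all graphs. Each error term in \eqref{eq:udg-main} comes from expanding binomial coefficients, from using $b\le|X_3|+1$, and from $|X|\le2\Delta$. Each of these is bounded by an absolute constant times $\Delta$, so a single constant $C$ works for every edge. Since $\chi'_s\le\chi'_{s,\ell}$, this bound also proves Theorem~\ref{thm:unit disk}.
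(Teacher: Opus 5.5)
Your proposal is correct and follows the paper's proof: greedy list colouring along the nondecreasing-$\kappa$ ordering, with the number of forbidden colours at each edge bounded by $\eta(e)$ and then by Lemma~\ref{lem:udg-optimisation}, and the algorithmic claim resting on $\kappa(e)$ being computable from $G$ alone. Your remarks on arbitrary tie-breaking and on the $O(\Delta)$ constant being uniform over all edges make explicit what the paper leaves implicit, and they are consistent with its derivation.
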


\begin{proof}

Order the edges of $G$ by nondecreasing values of $\kappa$. Consider an arbitrary assignment of list  such that every edge
has a list of at least $1+\max_{e\in E(G)}\eta(e)$ colours. Process the edges in this order and assign to each edge a colour from its list that is not used by any previously coloured edge that conflicts with it. When an edge $e$ is processed, exactly $\eta(e)$ preceding edges conflict with $e$. These edges forbid at most $\eta(e)$ colours from the list of $e$. Since the list contains at least $\eta(e)+1$ colours, an available colour for edge $e$ always exists. The resulting colouring is therefore a strong list edge colouring of $G$. Consequently, $ \chi'_{s,\ell}(G)\leq 1+\max_{e\in E(G)}\eta(e).$ By Lemma \ref{lem:udg-optimisation}, $ \eta(e)\leq\frac{225}{142}\Delta^2+O(\Delta)$. Therefore, $ \chi'_{s,\ell}(G)\leq\frac{225}{142}\Delta^2+O(\Delta).$

For every edge $e=uv$, the value
$\kappa(e)=|E(G[N(u)\cap N(v)])|$ can be computed directly from any graph $G$. Hence, the edges of $G$ can be sorted by their $\kappa$-values, and the previously coloured edges conflicting with each edge can be checked in polynomial time. Thus, a strong list edge colouring can be found in
polynomial time without a unit disk representation.
\hfill $\square$
\end{proof}

\noindent\textit{Proof of Theorem~\ref{thm:unit disk}. }
Since $\chi'_s(G)\leq\chi'_{s,\ell}(G)$ for every graph $G$,
Theorem~\ref{thm:udg-list} gives $
\chi'_s(G)\leq\frac{225}{142}\Delta^2+O(\Delta).
$ \hfill $\square$

\section{Conclusion}\label{sec:conclusion}

We studied Strong Edge Colouring on disk graphs and unit disk graphs. For disk graphs, we obtained a polynomial-time $6$-approximation algorithm that does not require a geometric representation of the input graph. For unit disk graphs, we improved the best known asymptotic upper bound on the strong chromatic index from $13\Delta^2/8$ to
$\frac{225}{142}\Delta^2+O(\Delta)$, and in fact established the same bound for the strong list chromatic index.

Several natural questions remain open. For arbitrary disk graphs, it would be interesting to improve the approximation factor below $6$, thereby reducing the gap with the known inapproximability threshold. For unit disk graphs, there is still a considerable gap between the known quadratic lower bound with leading coefficient $9/16$ and our upper coefficient $225/142$. Closing this gap, either by improving the upper bound or by constructing stronger lower-bound examples, is an interesting direction for future work.

\bibliographystyle{splncs04}
\bibliography{ref}

\end{document}